\newtheorem*{theorem*}{Theorem}
\newtheorem{lemma}{Lemma}[section]
\newtheorem{proposition}[lemma]{Proposition}
\newtheorem{corollary}{Corollary}[section]
\theoremstyle{definition}
\newtheorem{definition}[lemma]{Definition}
\theoremstyle{remark}
\newtheorem{remark}{Remark}[section]
\newtheorem{example}{Example}[section]
\newcommand{\real} {\mathbb{R}}
\newcommand{\enteros} {\mathbb{Z}}
\newcommand{\disco} {\mathbb{D}}
\newcommand{\R}{{\bf\sf R}}
\newcommand{\subR}{{\bf\sf Q}}
\newcommand{\K}{{\bf\sf K}}
\newcommand{\dist}{\mbox{dist}}
\newcommand{\diam}{\mbox{diam}}
\title[Dynamical indicators and nonuniformly hyperbolic dynamics]{On the approximation of dynamical indicators \\ in systems with nonuniformly 
hyperbolic behavior}
\author{ 
Fernando Jos\'e S\'anchez-Salas}
\address{
Departamento de Matem\'aticas, Facultad Experimental de Ciencias, Universidad del Zulia, Avenida Universidad, Edificio Grano de Oro, Maracaibo, Venezuela
}
\email{fjss@fec.luz.edu.ve}
\date{March 25, 2015}
\subjclass[2010]{37D25, 37D35}
\keywords{Nonuniformly hyperbolic systems, uniformly hyperbolic systems, approximation of hyperbolic measures}
\thanks{The author thanks the Abdus Salam International Centre for Theoretical Physics (ICTP) for its hospitality during the preparation of this manuscript. I would also 
like to express my gratitude to professor Stefano Luzzatto for advice and encouragment. This work was partially supported by the Associateship 
Programme of ICTP}
\begin{document}

\begin{abstract}
Let $f$ be a $C^{1+\alpha}$ diffeomorphism of a compact Riemannian manifold and $\mu$ an ergodic hyperbolic measure with positive entropy. We prove that 
for every continuous potential $\phi$ there exists a sequence of basic sets $\Omega_n$ such that the topological pressure $P(f|\Omega_n,\phi)$ converges to the free 
energy $P_{\mu}(\phi) = h(\mu) + \int\phi{d\mu}$. We also prove that for a suitable class of potentials $\phi$ there exists a sequence of 
basic sets $\Omega_n$ such that $P(f|\Omega_n,\phi) \to P(\phi)$.  
\end{abstract}

\maketitle

\section{Introduction}\label{sec:introduction}

This is work is concerned with the approximation of dynamical indicators in systems with nonuniformly hyperbolic behavior.

(Uniformly) hyperbolic dynamics is characterized by a (continuous) decomposition of the tangent space $T_xM$ into invariant subspaces which are contracted 
(resp. expanded) by the derivative. The local instability of the orbits generated by this structure and the recurrence due to the compactness of the space gives rise 
to a complex and very rich orbit structure which is well understood. Among other things uniformly hyperbolic systems exhibit strong recurrence and mixing properties, 
many invariant measures, positive entropy and abundance of periodic points. Moreover, they are robust and structurally stable and can be modelled by Markov chains both 
topologically and from the measure-theoretical point of view. See \cite{hasselblat} and \cite{katok.hasselblat} for a comprehensive presentation of the theory.

Nonuniformly hyperbolic dynamical systems were introduced by Pesin in the early seventies as a generalization the notion of uniformly hyperbolic dynamics. Invariant measures 
are at the heart of the theory of nonuniformly hyperbolic systems. We say that an $f$-invariant Borel probability $\mu$ is \emph{hyperbolic} if all its Lyapunov exponents
\begin{equation}\label{lyapunov.exponents}
\chi(x,v) = \lim_{n \to \pm\infty}\dfrac{\|Df^n(x)v\|}{n}, \quad\forall \ v \in T_xM-\{0\}
\end{equation}
are nonzero $\mu$-a.e. By Oseledec's theorem if $\mu$ is hyperbolic then there exists an $f$-invariant Borel subset $K \subset M$ and a splitting 
into stable $E^s_x$ and unstable $E^u_x$ Borel measurable fields of subspaces --in opposition to continuous-- of the tangent space over $K$. Vectors in $E^s_x$ (resp. $E^u_x$) are 
\emph{asymptotically} contracted (resp. expanded) by the derivative, that is, the time $N$ needed for every vector $v \in E^s_x$ (resp. $v \in E^u_x$) to be contracted (resp. expand) 
depends on $x$ in a very irregular way --typically as a Borel function of the point-- moreover the angle $\angle(E^s_x,E^u_x)$ is a Borel function of $x$ and decays 
to zero with subexponential rates along the orbits. This set of conditions define the notion of nonuniform hyperbolicity. 
See subsection \ref{subsec:pesin.theory} for details.

We refer to \cite{barreira.pesin} for an up-to-date overview of the theory and to \cite{pesin.2010} for a survey on open problems on nonuniformly 
hyperbolic dynamical systems.

The following well known theorem due to A. Katok and L. Mendoza is a sample of the type of results that we are interested:
\ 
\\
\\
{\em 
Let $f$ be a $C^{1+\alpha}$ diffeomorphism of a compact Riemannian manifold $M$ and $\mu$ be a hyperbolic measure with positive metric entropy. Suppose in addition that 
$\mu$ is ergodic. Then there exists a sequence of hyperbolic horseshoes $\Omega_n$ and ergodic measures $\mu_n$ supported on 
$\Omega_n$ such that:
\begin{itemize}
\item $\mu_n \to \mu$, in the weak$^*$ topology and
\item $h(\mu_n) \to h(\mu)$.
\end{itemize}
}
\ 
\\
See \cite[Theorem S.5.10]{katok.mendoza}. 

A. Katok laid down the foundations to study this type of problems in his seminal paper \cite{katok} about relations between entropy, 
periodic orbits and Lyapunov exponents of systems with nonuniformly hyperbolic behavior. More recently these questions received attention in \cite{gelfert.wolf}, 
\cite{gelfert.2009}, \cite{gelfert.2010}, \cite{liang.liu.sun}, \cite{luzzatto.sanchez}, \cite{sanchez-salas.1}, \cite{sanchez-salas.2} and \cite{wang.sun}.

Katok-Mendoza's theorem suggests to ask whether or not it is possible to approximate, along suitable sequences of hyperbolic sets, dynamical indicators 
such as topological pressure, fractal dimensions and Lyapunov exponents, in systems with nonuniformly hyperbolic behavior.

In this note we make the case for $P(\phi)$, the topological pressure of a continuous potential $\phi$. This quantity is a topological invariant of 
the dynamics which generalizes the notion of topological entropy and can be defined as a weighted rate of growing of the number of finite, dynamically non equivalent 
orbits, up to finite precision. A central result in the thermodynamic formalism is the following {\em variational principle},
\begin{equation}\label{additive.variational.principle}
P(\phi) = \sup_{\mu \in {\mathcal M}_f}\left\{h(\mu) + \int\phi{d\mu}\right\},
\end{equation}
where $h(\mu)$ denotes the {\em Kolmogorov-Sinai entropy}, $P_{\mu}(\phi) := h(\mu) + \int\phi{d\mu}$ is the free energy or measure-theoretical pressure and 
${\mathcal M}_f$ the set of $f$-invariant Borel probabilities. See \cite{bowen} and \cite{katok.hasselblat}.

This notion plays a central role in the ergodic theory of systems with some hyperbolicity as long as some valuable information about Lyapunov exponents, 
fractal dimensions, multifractal spectra and invariant measures which are extreme points of certain variational principles can be extracted from the topological pressure of 
suitable potentials. See \cite{barreira.2010} and \cite{barreira.gelfert}. 

Our first result is a generalization of Katok-Mendoza's theorem for the free energy of a continuous potential $\phi$ with respect to a hyperbolic measure $\mu$ with 
positive entropy.

Let $\mu_n$ be the sequence of hyperbolic measures given by Katok-Mendoza's theorem. Then, for every continuous $\phi$
$$
h(\mu_n) + \int\phi{d\mu_n} \to h(\mu) + \int\phi{d\mu},
$$
and therefore, by the variational principle (\ref{additive.variational.principle}),
$$
P(\phi) \geq \limsup_{n \to +\infty}P(f|\Omega_n,\phi) \geq \liminf_{n \to +\infty}P(f|\Omega_n,\phi) \geq h(\mu) + \int\phi{d\mu}.
$$
We prove that $\Omega_n$ can be chosen carefully in such way that the limit exists and it is equal to the measure-theoretical pressure $P_{\mu}(\phi)$.
\ 
\\
\\
{\bf Theorem A} \ {\em Let $f$ be a $C^{1+\alpha}$ diffeomorphism of a compact manifold and $\mu$ an ergodic, hyperbolic measure with $h(\mu) > 0$. Then, for every 
continuous potential $\phi$ there exists a sequence of basic sets $\Omega_n = \Omega_n(\mu,\phi)$ with rate 
of hyperbolicity $\chi(\Omega_n) \geq \beta$ for some constant $\beta > 0$ only depending on $\chi(\mu)$, the rate of hyperbolicity of $\mu$, such that
\begin{equation}\label{main.1}
P(f|\Omega_n,\phi) \to h(\mu) + \int\phi{d\mu}.
\end{equation}
Furthermore $\Omega_n$ has the following {\bf strong approximation property}: $\mu_n \to \mu$ for every sequence of ergodic measures $\mu_n \in {\mathcal M}_f(\Omega_n)$.
}
\ 
\\
\\
Recently Gelfert announces a similar result in \cite{gelfert.2014} for $C^{1+\alpha}$ diffeomorphisms or $C^1$ diffeomorphisms preserving a 
hyperbolic $f$-invariant probability having a dominated splitting on its support. We recall that the \emph{rate of hyperbolicity of an 
Oseledec's regular point $x$} as
$$
\chi(x) := \min\{|\chi(x,v)| : v \in T_xM-\{0\}\}.
$$
Given an $f$-invariant Borel probability $\mu$, we define the \emph{rate of hyperbolicity of $\mu$} as
$$
\chi(\mu) := \int\chi(x)d\mu(x)
$$ 
Also, given a compact $f$-invariant subset $\Omega$ we define its rate of hyperbolicity as
$$
 \chi(\Omega) := \inf\{\chi(\mu): \mu \in \mathcal{M}_f(\Omega)\},
$$
where $\mathcal{M}_f(\Omega)$ is the set of $f$-invariant probabilities in $\Omega$

The probabilities $\mu_n$ provided by Katok-Mendoza's theorem are measures of maximal entropy, that is, $h(\mu_n) = h_{top}(f|\Omega_n)$. Therefore if 
there would exists a sequence of ergodic hyperbolic measures $\mu_n$ with positive entropy such that $h(\mu_n) \to h_{top}(f)$ then, by an easy 'diagonal' argument 
we can find a sequence of hyperbolic horseshoes $\Omega_n$ such that 
$$
h_{top}(f|\Omega_n) \to h_{top}(f).
$$
Of course, a good amount of hyperbolicity in the phase space is necessary for this type of approximation results. Following this idea one 
may ask whether or not there exists in systems with sufficient hyperbolicity, a sequence of hyperbolic sets $\Omega_n$ such that
$$
P(f| \Omega_n,\phi) \to P(\phi),
$$
for every continuous $\phi$. However, the following example shows that the answer to this question is, in general, negative, even if the system is nonuniformly hyperbolic. 

\begin{example}
Let $\Omega_0 \subset \disco^2$ be a horseshoe with internal tangencies defined inside a compact disc $\disco \subset \real^2$. This is a nonuniformly 
hyperbolic set with positive topological entropy. See \cite{cao.luzzatto.rios}. Let us plug $\Omega_0$ as a compact $f$-invariant subset of a $C^{\infty}$ diffeomorphism of the sphere in the usual 
way with a source at the north-pole $N$ and a sink at the south-pole $S$. Notice that $h_{top}(f) = h_{top}(f | \Omega_0)$ and that every $f$-invariant basic set 
$\Omega$ is contained in $\Omega_0$. Let $\phi$ be a continuous function such that $\phi(N) = 2h_{top}(f)$ and $\phi(x) = 0$, for every $x \not\in U$, where $U$ is small 
neighborhood of $N$ contained in the connected component of $W^u(N)$ containing $N$. Then every point ergodic $f$-invariant Borel probability is either $\delta_N$, an 
ergodic measure supported on $\Omega$ or $\delta_S$, the Dirac measure concentrated at the south-pole. Then,
$$
P(\phi) = \sup_{\mu \in \mathcal{M}_f}\{h(\mu) + \int\phi{d\mu}\} = \phi(N) = 2h_{top}(f)
$$
and, for every $f$-invariant basic set $\Omega \subset S^2$,
$$
P(\phi) >  P(f|\Omega,\phi).
$$
\end{example}
 
This happens since the support of $\phi$ is away from the part of phase space where basic sets are located. Therefore if $\phi$ captures the hyperbolicity 
of the phase space then it would be possible to approximate $P(\phi)$ by suitable sequences of hyperbolic sets. 

\begin{definition}\label{definition.hyperbolic.potential}
We say that $\phi$ is a \emph{hyperbolic potential} if 
$$
P(\phi) - \sup_{\mu \in \mathcal{M}_f}\int\phi{d\mu} > 0
$$
and there exists a sequence of ergodic hyperbolic measures $\mu_n$ such that
$$
h(\mu_n) + \int\phi{d\mu_n} \to P(\phi).
$$
We denote by $C(\mathcal{H})$ the set of hyperbolic potentials.
\end{definition}
\ 
\\
{\bf Theorem B}
{\em Let $f$ be a $C^{1+\alpha}$ diffeomorphism of a compact manifold with $h_{top}(f) > 0$. Then, for every $\phi \in C(\mathcal{H})$ there exists a sequence $\Omega_n$ 
of basic sets such that 
$$
P(f|\Omega_n,\phi) \to P(\phi).
$$
In particular, it holds the following variational equation:
\begin{equation}\label{main.2}
P(\phi) = \sup_{\Omega \in {\mathcal H}}\,P(f | \Omega,\phi),
\end{equation}
where $\mathcal{H}$ is the family of $f$-invariant basic sets. 
}
\ 
\\
\\
Hyperbolic potentials were introduced in \cite{gelfert.wolf} by K. Gelfert and C. Wolf. There they proved that the topological pressure 
of these potentials can be computed as a weighted rate of growing of hyperbolic periodic orbits filtrated according to the quality of its hyperbolicity.
\ 
\\
\\
{\bf Proof of Theorem B:} let $\phi \in C(\mathcal{H})$ be a hyperbolic potential, the existence of a sequence of approximating basic sets $\Omega_n$ for $P(\phi)$ 
follows from Theorem A by the following straightforward 'diagonal' argument: let $\phi \in C(\mathcal{H})$ and $\mu_n$ be a sequence of hyperbolic $f$-invariant ergodic 
probabilities such that $h(\mu_n) + \int\phi{d\mu_n} \to P(\phi)$. Then $h(\mu_n) > 0$ for every sufficiently large $n$. Indeed, let 
$0 < \epsilon < P(\phi) - \sup_{\mathcal{M}_f}\int\phi{d\mu}$. Then, 
$h(\mu_n) + \int\phi{d\mu_n} > P(\phi) - \epsilon$ for every large $n$. Therefore,
$$
h(\mu_n) > P(\phi) - \int\phi{d\mu_n} - \epsilon \geq P(\phi) - \sup_{\mu \in \mathcal{M}_f}\int\phi{d\mu} - \epsilon > 0.
$$
By Theorem A, for each $\mu_n$ there exists a sequence of basic sets $\Omega^m_n$ such that $P(f|\Omega^m_n,\phi)$ converges to the free energy 
$h(\mu_n) + \int\phi{d\mu_n}$. Passing to a suitable 'diagonal' sequence $\Omega_n = \Omega^{m_n}_n$ we get a sequence of basic sets 
such that $P(f|\Omega_n,\phi) \to P(\phi)$. The variational equation (\ref{main.2}) holds since $P(\phi) \geq P(f|\Omega,\phi)$ for every compact $f$-invariant subset. \ {\bf QED}

Theorem A is a consequence of the methods that we developed with S. Luzzatto in \cite{luzzatto.sanchez} and an idea of Mendoza in \cite{mendoza.1988}. Of course hyperbolic horseshoes 
in Katok-Mendoza's theorem \cite[Theorem S.5.10]{katok.mendoza} are basic sets. However our construction differ in several points from \cite{katok.mendoza}. In 
particular, our approximating sets are $f$-invariant saturate of horseshoes with finitely many branches with variable return time (see \ref{subsec:aleksev.sets}) with 
the strong approximation property mentioned at Theorem A. 

We start observing that the measure-theoretical pressure  $P_{\mu}(\phi) = h(\mu) + \int\phi{d\mu}$ is a weighted rate of growing of dynamically 
non equivalent finite typical orbits, up to finite precision (see Proposition \ref{free.energy.prop} in section \ref{sec:proof.main.technical.lemma.1}). Compare 
\cite[Theorem 1.1]{mendoza.1988}. Then we draw carefully finitely many finite orbits which are a good sample for this statistic with small precision. These orbits return to a suitable non invariant 
uniformly hyperbolic set or Pesin set giving rise to \emph{hyperbolic branches} $f^{n_i}: S_i \to U_i$ with variable return times $n_i$, 
where by hyperbolic branches we mean diffeomorphisms mapping 'vertical' strips $S_i$ onto 'horizontal' strips $U_i$ 
inside a fixed rectangle crossing each other transversally according to Smale's horseshoe model, contracting in the vertical direction and 
expanding distances in the horizontal. See subsection \ref{subsec:aleksev.sets} and definition \ref{definition.hyperbolic.branch}. 
This is a consequence of the pseudo-Markov property of coverings of the hyperbolic Pesin sets by regular Lyapunov rectangles (see Proposition \ref{markov.covers}) in 
subsection \ref{subsec:pesin.theory}. Let $\Omega^*$ be the maximal invariant subset of the piecewise smooth map $F$ defined by the hyperbolic branches so chosen. $\Omega^*$ is endowed with a 
hyperbolic product structure according to \cite[Definition 1]{young}, that is, two transversally intersecting continuous laminations $\mathcal{F}^s$ and $\mathcal{F}^u$ with an angle bounded from below 
which are contracted (resp. expanded) exponentially by iterations of $F$ and such $\Omega^* = \bigcup\mathcal{F}^s \cap \bigcup\mathcal{F}^u$. 
See subsection \ref{subsec:aleksev.sets}.

The hyperbolic branches $f^{n_i}: S_i \to U_i$ so chosen are quasi-generic meaning that the iterates of every point $x \in S_i$ 
gives a good approximation of $\mu$, up to a small precision. Then every ergodic measure supported on $\Omega$, the $f$-invariant saturate of $\Omega^*$, is near to 
$\mu$ in the weak topology.

Then we prove that $P(f|\Omega,\phi)$ is good approximation of the measure-theoretical pressure $P_{\mu}(\phi)$ by estimating the topological pressure of 
$\Omega$ as a weighted rate of growing of hyperbolic periodic orbits in $\Omega$,
\begin{equation}\label{weighted.hyperbolic.counting}
 P(f|\Omega,\phi) = \limsup\limits_{n \to +\infty}\dfrac{1}{n}\log\left(\sum_{x \in Per_n(f|\Omega)}\exp\sum_{j=0}^{n-1}\phi(f^j(x))\right).
\end{equation}

For this we use a shadowing argument to compare the weight of the periodic orbits of $\Omega$ with the weight of the chosen sample of finitely many returning 
points generating $\Omega^*$. See section \ref{sec:proof.main.technical.lemma.3}. Here some care has to be taken to keep track of the combinatorics of periodic orbits, 
due to the variable return times defining $\Omega^*$. This is done in section \ref{sec:proof.main.technical.lemma.2}.
\ 
\\
\\
\textbf{Organization of the paper}. The paper is organized as follows: Section \ref{sec:proof.main.technical.lemma.preliminaries} contain 
preliminary material to the proof of Theorem A: in subsections \ref{subsec:aleksev.sets} and \ref{subsec:uniform.approximation.property} we recall the notion of an Alekseev set and the uniform approximation 
property. In subsection \ref{subsec:pesin.theory} we recall main definitions of nonuniformly hyperbolic dynamics, Pesin sets and pseudo-Markov property used
in the construction of Aleeksev sets. We give the arguments to choose $\Omega$ at section \ref{sec:proof.main.technical.lemma.1}. 
The estimation of the topological pressure is done at sections \ref{sec:proof.main.technical.lemma.2} and \ref{sec:proof.main.technical.lemma.3}.

\section{Proof of Theorem A: preliminaries}\label{sec:proof.main.technical.lemma.preliminaries}

\subsection{The geometrical model: Alekseev sets}\label{subsec:aleksev.sets}

Our geometric model will be defined by a finite collection $\mathcal S$ of pairwise disjoint 
{\em stable cylinders} $\{S_{1},.., S_{N}\}$ and corresponding pairwise disjoint collection $\mathcal U$ of 
{\em unstable cylinders} \ $\{U_{1},..., U_{N} \}$ contained in a rectangle $\R$ 
which are the domain (resp. co-domain) of suitable {\em hyperbolic branches}
$$
f^{R_{i}}: S_{i}\to U_{i}
$$
defined by finitely many return times \( R_{i} \). By hyperbolic we mean that $f^{R_i}$ contracts (resp. expands) in the 'vertical' 
(resp. 'horizontal') directions; 
that is, they preserve suitable continuous families of cone. See Definition \ref{definition.hyperbolic.branch} in subsection 
\ref{subsec:pesin.theory} for details.

\begin{definition}
An {\em Alekseev set} \ \( \Omega^{*}\) is defined by an array of hyperbolic branches $\{f^{R_{i}}: S_{i}\to U_{i}\}$ all whose stable 
cylinders 
\( S_{i} \) {\em 'crosses'} all \( U_{i} \)'s transversally and such that every \( U_{i} \) {\em 'crosses'} all \( S_{i} \)'s transversally. 
\( \Omega^{*}\) is the maximal invariant set in $\R$ under iterations of \( f^{R} \) and its inverse
$$
\Omega^* := \bigcap\limits_{n \in \enteros}(f^R)^{n}(\R),
$$
where $f^{R} : \bigcup_iS_i \to \bigcup_iU_i$ is the piecewise smooth invertible map define by 
\[
f^{R}|_{S_{i}}:= f^{R_{i}}|_{S_{i}} \quad\text{ and } \quad (f^{R})^{-1}|_{U_{i}}:= f^{-R_{i}}|_{U_{i}}.
\]
\end{definition}

\begin{remark}
This construction was originated in the work of M. V. Alekseev aiming at to describe topological analogues of Markov chains. See 
\cite{alekseev}.
\end{remark}

The next couple of technical results were proved in \cite[Section 3]{luzzatto.sanchez}. 

\begin{lemma}\label{alekseev.set}
$\Omega^*$ is an $f^{R}$-invariant Cantor set endowed with a hyperbolic product structure by which we mean two continuous laminations of 
local $f^{R}$-invariant 
manifolds $\mathcal F^{S}$ (resp. $\mathcal F^{U}$) with angles uniformly bounded from below by a constant $ > 0$ which are exponentially 
contracted (resp. expanded) 
by $f^{R}$ and such that
$$
\Omega^* = \left(\bigcup\mathcal F^{S}\right) \cap \left(\bigcup\mathcal F^{U}\right).
$$ 
\end{lemma}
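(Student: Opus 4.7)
The plan is to reduce the lemma to the classical Smale horseshoe machinery, suitably adapted to the piecewise smooth, variable-return-time map $f^R$. The proof breaks into three blocks: symbolic coding and the Cantor structure; construction of the two laminations via cone fields and a graph transform; and a transversality argument yielding the product structure.

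First I would establish the Cantor structure and its coding. Using the invariance of the stable cone field (contracting on vertical directions) under each $f^{R_i}$, one shows by induction that
$$\R_n^- := \bigcap_{k=0}^{n}(f^R)^{-k}\left(\bigcup_{i}S_{i}\right)$$
is a finite disjoint union of pairwise disjoint ``vertical'' sub-strips of $\R$, indexed by admissible words $s_0 s_1 \cdots s_n \in \{1,\dots,N\}^{n+1}$, whose widths shrink exponentially because $f^R$ contracts stable cones uniformly. The Alekseev hypothesis that every $S_j$ crosses every $U_i$ transversally implies that \emph{every} transition $s_k s_{k+1}$ is admissible, so the associated subshift is the full $N$-shift. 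Symmetrically, applying the argument to $(f^R)^{-1}$ with the expanding unstable cones yields ``horizontal'' strips $\R_n^+$. Then $\Omega^* = \bigcap_n (\R_n^- \cap \R_n^+)$ is Cantor, and the map $x \mapsto (s_k)_{k\in\enteros}$ defined by $(f^R)^k(x) \in S_{s_k}$ is a homeomorphism onto the full shift.

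Second, I would construct the invariant laminations leaf by leaf. For $x \in \Omega^*$ with coding $(s_k)_{k\in\enteros}$, define
$$W^S_{loc}(x) := \bigcap_{n \geq 0}(f^R)^{-n}\bigl(S_{s_n}\bigr), \qquad W^U_{loc}(x) := \bigcap_{n \geq 0}(f^R)^{n}\bigl(U_{s_{-n}}\bigr).$$
Cone invariance together with the graph transform (applied to the hyperbolic branches of Definition \ref{definition.hyperbolic.branch}) shows that each term in the first intersection is a Lipschitz graph over the stable direction whose vertical width decays exponentially; the intersection is therefore a single Lipschitz curve through $x$, upgraded to $C^1$ by standard bounded-distortion estimates for $C^{1+\alpha}$ maps. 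The mirror argument produces $W^U_{loc}(x)$ as a horizontal $C^1$ graph. Continuity of the two laminations $\mathcal{F}^S := \{W^S_{loc}(x)\}$ and $\mathcal{F}^U := \{W^U_{loc}(x)\}$ in the base point follows from the Lipschitz dependence of the graph-transform fixed point on the admissible word, which holds because the cone contraction/expansion factors are uniform over $\Omega^*$.

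Third, the product structure. The transverse cone fields have angles uniformly bounded below, so a leaf of $\mathcal{F}^S$ and a leaf of $\mathcal{F}^U$ can meet in at most one point inside $\R$. Given any pair of one-sided codings $(s_k)_{k\geq 0}$ and $(s_k)_{k\leq 0}$, the full-shift property lets us concatenate them into an admissible bi-infinite word, whose associated point in $\Omega^*$ realizes the intersection of the corresponding stable and unstable leaves; hence every leaf of $\mathcal{F}^S$ meets every leaf of $\mathcal{F}^U$, and $\Omega^* = (\bigcup\mathcal{F}^S)\cap(\bigcup\mathcal{F}^U)$.

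The main obstacle is the verification of uniform hyperbolic estimates for the composed map $f^R$: its pieces are \emph{different} iterates $f^{R_i}$ glued along the boundaries of the $S_i$, and the rates of contraction/expansion on each branch come from the cumulative behavior of $Df$ over $R_i$ steps in a nonuniformly hyperbolic context. Packaging this correctly is precisely the content of being a ``hyperbolic branch'', so the task reduces to quoting that definition carefully and checking that the cone constants and hyperbolicity bounds are independent of the branch chosen, which in turn forces one to work with branches whose domains $S_i$ lie in a common regular Lyapunov (Pesin) rectangle.
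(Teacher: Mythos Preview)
Your proposal is a correct and well-organized outline of the standard Alekseev--Smale horseshoe argument: symbolic coding via nested vertical/horizontal strips, graph-transform construction of the leaves, and the transversality/full-shift argument for the product structure. There is no genuine gap.

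However, note that the paper does \emph{not} actually prove this lemma: immediately before the statement it says ``The next couple of technical results were proved in \cite[Section 3]{luzzatto.sanchez}'', and simply quotes the result. So there is no in-paper proof to compare against; your outline is essentially a reconstruction of what the cited reference presumably contains. In that sense your approach is not ``different'' from the paper's---the paper has no approach of its own here---but it is more than what the present paper requires, which is just the citation.
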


These hyperbolic Cantor sets are the primary blocks in our construction of approximating basic sets.

\begin{proposition}
Let $\Omega^*$ an Alekseev set defined by finitely many hyperbolic branches $f^{R_{i}}: S_{i}\to U_{i}$, then:
\begin{equation}
\Omega = \bigcup_i\bigcup_{j=0}^{R_i-1}f^j(\Omega^*_i),
\end{equation}
is the $f$-invariant saturate of $\Omega^*$ and it is a topologically transitive, locally maximal, uniformly hyperbolic $f$-invariant subset. 
\end{proposition}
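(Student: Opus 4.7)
The plan is to verify the four assertions in turn, relying on the hyperbolic product structure of $\Omega^*$ for $f^R$ provided by Lemma~\ref{alekseev.set} and on the uniform bound $R_i \in [R_{\min}, R_{\max}]$, which follows from the finiteness of the family of branches.

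I would first put $\Omega^*_i := \Omega^* \cap S_i$, so that $\Omega^* = \bigsqcup_i \Omega^*_i$ and $f^{R_i}(\Omega^*_i) \subseteq \Omega^* \cap U_i$. Forward $f$-invariance of $\Omega$ is then an unwinding of notation: $f$ shifts the stratum index $j$ in $f^j(\Omega^*_i)$ upward by one, and when $j = R_i - 1$ the image lands in $\Omega^*$, hence in the $j = 0$ stratum of some $\Omega^*_k$. Backward invariance is analogous, using that every $y \in \Omega^*_i \subseteq \bigcup_k U_k$ is of the form $f^{R_k}(x)$ for some $x \in \Omega^*_k$, so that $f^{-1}(y) \in f^{R_k - 1}(\Omega^*_k)$. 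The same two observations identify $\Omega$ with $\bigcup_{n \in \mathbb{Z}} f^n(\Omega^*)$, establishing that $\Omega$ is the $f$-saturate of $\Omega^*$.

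For uniform hyperbolicity I would transport the $f^R$-splitting from Lemma~\ref{alekseev.set}: setting $E^{s/u}_y := Df^j_x(E^{s/u}_x)$ at $y = f^j(x)$ for $x \in \Omega^*_i$ and $0 \le j < R_i$ yields a continuous $Df$-invariant splitting on $\Omega$. Since the return times are bounded above and $\|Df^{\pm 1}\|$ is bounded on the compact manifold $M$, the uniform exponential rates of $f^R$ translate into uniform $f$-rates after redistributing the exponents over the $R_i$ intermediate iterates (passing, if needed, to an adapted Lyapunov metric). Topological transitivity is then immediate: the crossing hypothesis makes $f^R|_{\Omega^*}$ conjugate to the full shift on $N$ symbols, so there is an $x^* \in \Omega^*$ with dense $f^R$-orbit, and the $f$-iterates of $x^*$ are dense in $\Omega$.

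The main obstacle is local maximality, because the Alekseev set $\Omega^*$ is defined only as the maximal $f^R$-invariant subset of the rectangle $\R$, not of an ambient open neighborhood. My plan is to construct $V = \bigcup_i \bigcup_{j=0}^{R_i - 1} V_{i,j}$, with each $V_{i,j}$ a small open neighborhood of $f^j(\Omega^*_i)$, chosen pairwise disjoint and so that $f(V_{i,j}) \subset V_{i,j+1}$ for $0 \le j < R_i - 1$ while $f(V_{i,R_i - 1}) \cap V \subseteq \bigsqcup_k V_{k,0}$, mimicking the Markov structure of $\Omega^*$. A point $z \in \bigcap_{n \in \mathbb{Z}} f^n(V)$ then generates a bi-infinite itinerary $(i_n) \in \{1,\dots,N\}^{\mathbb{Z}}$; by continuity of the stable/unstable cone fields and the uniform hyperbolicity already proven, a standard hyperbolic-continuation argument identifies $z$ with the unique point of $\Omega^*$ realising that itinerary. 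This gives $\Omega = \bigcap_{n \in \mathbb{Z}} f^n(V)$ and hence local maximality.
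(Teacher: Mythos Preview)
The paper does not actually prove this proposition: immediately after the statement it writes ``See \cite[Lemma 3.1]{luzzatto.sanchez} and references therein'' and moves on. Your sketch therefore supplies what the paper defers to an external source, and it follows the natural route one would take: transport the $f^R$-hyperbolic splitting along the finitely many intermediate iterates (using the uniform bound on return times), read transitivity off the full-shift conjugacy (which the paper states later as Lemma~\ref{symbolic.dynamics}), and build an isolating neighborhood for local maximality.

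One point needs more care. For local maximality you want the neighborhoods $V_{i,j}$ of $f^j(\Omega^*_i)$ to be \emph{pairwise disjoint}, but nothing in the Alekseev hypotheses forbids, say, $f^3(\Omega^*_1)$ from meeting $f^7(\Omega^*_2)$ somewhere in $M$, or an intermediate iterate from re-entering the rectangle $\R$; the branches are only assumed disjoint at their endpoints $S_i, U_i$, not along their entire itineraries. If the $V_{i,j}$ overlap, the itinerary of a point in $\bigcap_n f^n(V)$ is no longer well defined and your coding argument breaks. A cleaner way to finish, once uniform hyperbolicity is in hand, is to show $\Omega$ has local product structure: for nearby $x,y\in\Omega$ the intersection $W^s_\varepsilon(x)\cap W^u_\varepsilon(y)$ generates, via the cone-field/graph-transform machinery already implicit in Lemma~\ref{alekseev.set}, a bi-infinite pseudo-orbit shadowed by an actual orbit in $\Omega^*$, hence the bracket point lies in $\Omega$. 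For compact hyperbolic sets local product structure is equivalent to local maximality, and this avoids any disjointness hypothesis on intermediate images.
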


See \cite[Lemma 3.1]{luzzatto.sanchez} and references therein.

\subsection{Strong approximation property}\label{subsec:uniform.approximation.property}

The sequences $\Omega_n$ in Theorem A approximate uniformly $\mu$ in that $\mu_n \to \mu$ for every sequence $\mu_n$ of ergodic measures 
such that 
$supp \mu \subset \Omega_n$. Actually, given an open neighborhood ${\mathcal N}$ of $\mu$ in the weak-* topology our methods allows to 
construct hyperbolic basic sets 
$\Omega = \Omega({\mathcal N})$ such that $\nu \in {\mathcal N}$ for every ergodic Borel probability $\nu$ supported on $\Omega$. This is 
done as follows.

Let $\mu$ be a Borel probability satisfying our main hypotheses. First recall that a point $x$ is generic for $\mu$ if
\[ 
\frac 1n\sum_{j=0}^{n-1}\phi(f^{j}(x)) \to \int\phi{d\mu} \quad\text{as}\quad  n\to \infty \quad\text{for all continuous functions} \ \phi \in C^0(M).
\]

Given a countable dense subset $\{\psi_i\}$ of $C^0(M)$ we denote, given two constants $\rho, s > 0$, the weak-\(*\) \( (\rho, s) \) neighborhood of 
\( \mu \)
\begin{equation}
{\mathcal O}(\mu, \rho, s) := \{\nu : \left|\int\psi_i{d{\mu}} - \int\psi_i{d{\nu}}\right| < \rho, \ i = 1, \cdots , s\};
\end{equation}
Clearly, $\mu_n \to \mu$ in the weak-* topology if and only if there are sequences $\rho_n \to 0^+$ and $s_n \to +\infty$ such that 
$\mu_n \in {\mathcal O}(\mu, \rho_n, s_n)$.

\begin{definition}\label{quasigeneric.definition}
We say that a point \( x \) is \( (\rho, s, n) \) \emph{quasi-generic} for the measure \( \mu \) if 
\[ 
\left|\frac 1n\sum_{j=0}^{n-1}\phi_{i}(f^{j}(x))-\int\phi_{i}d\mu\right|\leq \rho \quad \forall i\leq s.
\]

Furthermore, we say that a hyperbolic branch 
\[
f^n : S \to U
\]
is {\em $(\rho,s)$-quasi-generic} for \( \mu \) if every \( x\in S \) is \( (\rho, s, n )  \) quasi-generic for \( \mu \).
\end{definition}

We underline that to be  \(\rho, s, n) \)-quasi-generic simply means that the empirical measure $\mathcal{E}_{x,n} = 1/n\sum_{k=0}^{n-1}\delta_{f^k(x)}$ 
belongs to $\mathcal{O}(\mu,\rho,s)$.

\begin{proposition}\label{strong.approximation.property}
Let \( \rho, s > 0 \) and suppose there exists an Alekseev set \( \Omega^{*}(\rho, s) \) defined by \( (\rho,s) \) quasi-generic branches. 
Then $\mu_{\Omega} \in {\mathcal O}( \mu, 3 \rho, s)$ for every  \( f \)-invariant ergodic probability measure $\mu_{\Omega}$ supported on $\Omega(\rho,s)$, 
the $f$-invariant saturate of $\Omega^{*}(\rho, s)$. In particular,
$$
{\mathcal M}_f(\Omega(\rho,s)) \subset {\mathcal O}( \mu, 3 \rho, s),
$$
where ${\mathcal M}_f(\Omega(\rho,s))$ denotes the set of $f$-invariant Borel probabilities supported on $\Omega(\rho,s)$. 
\end{proposition}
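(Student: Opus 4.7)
\textbf{Proof proposal for Proposition \ref{strong.approximation.property}.}
The plan is to exploit the fact that the $f$-orbit of any point in $\Omega^{*} = \Omega^{*}(\rho,s)$ decomposes into consecutive blocks of lengths $R_{i_{l}}$, each starting inside some stable cylinder $S_{i_{l}}$. Since by hypothesis each branch $f^{R_{i}}: S_{i}\to U_{i}$ is $(\rho,s)$-quasi-generic for $\mu$, the Birkhoff sums over a single block are controlled, and transferring this blockwise control to a long-orbit estimate yields the desired weak-$*$ closeness of $\mu_{\Omega}$ to $\mu$.

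First I would fix an ergodic $f$-invariant probability $\mu_{\Omega}$ supported on $\Omega(\rho,s)$ and pick a Birkhoff-generic point $x\in \Omega^{*}$ for $\mu_{\Omega}$, meaning that $\tfrac{1}{N}\sum_{j=0}^{N-1}\phi_{i}(f^{j}(x)) \to \int \phi_{i}\,d\mu_{\Omega}$ for every $i = 1,\ldots, s$. Such an $x$ exists because $\Omega = \bigcup_{i}\bigcup_{j=0}^{R_{i}-1} f^{j}(\Omega^{*}_{i})$ is a \emph{finite} union, so the $f$-orbit of every $y\in \Omega$ hits $\Omega^{*}$ within $R_{\max} := \max_{i}R_{i}$ iterates; hence $\mu_{\Omega}(\Omega^{*}) > 0$ and the set of $\mu_{\Omega}$-generic points in $\Omega^{*}$ has positive $\mu_{\Omega}$-measure by Birkhoff's theorem.

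Next I would code the $f^{R}$-orbit of $x$ by the sequence of cylinders it visits: let $R_{i_{1}}, R_{i_{2}}, \ldots$ be the return times, characterized by $(f^{R})^{l-1}(x) \in S_{i_{l}}$, and set $N_{k} := R_{i_{1}} + \cdots + R_{i_{k}}$. For $N$ large, choose $k$ with $N_{k}\leq N < N_{k+1}$ and split
$$
\sum_{j=0}^{N-1}\phi_{i}(f^{j}(x)) = \sum_{l=1}^{k}\sum_{j=0}^{R_{i_{l}}-1}\phi_{i}\!\left(f^{j}((f^{R})^{l-1}(x))\right) + \sum_{j=N_{k}}^{N-1}\phi_{i}(f^{j}(x)).
$$
The tail has length at most $R_{\max}$, hence after division by $N$ contributes at most $R_{\max}\|\phi_{i}\|_{\infty}/N \to 0$. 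Since $(f^{R})^{l-1}(x) \in S_{i_{l}}$ and the branch $f^{R_{i_{l}}}: S_{i_{l}}\to U_{i_{l}}$ is $(\rho,s)$-quasi-generic, Definition \ref{quasigeneric.definition} yields
$$
\left|\sum_{j=0}^{R_{i_{l}}-1}\phi_{i}\!\left(f^{j}((f^{R})^{l-1}(x))\right) - R_{i_{l}}\int\phi_{i}\,d\mu\right| \leq R_{i_{l}}\,\rho.
$$
Summing over $l = 1,\ldots, k$, dividing by $N$, using $N_{k}/N \to 1$, and passing to the limit with the $\mu_{\Omega}$-genericity of $x$ gives
$$
\left|\int\phi_{i}\,d\mu_{\Omega} - \int\phi_{i}\,d\mu\right| \leq \rho < 3\rho
$$
for every $i \leq s$, so that $\mu_{\Omega} \in \mathcal{O}(\mu, 3\rho, s)$.

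The step I expect to require the most care is the bookkeeping around the passage from $N_{k}$ to $N$: the surplus $N - N_{k}$ is bounded by $R_{\max}$, which is finite (though possibly large) for each fixed $(\rho,s)$, and this finiteness is exactly what makes the tail and the factor $N_{k}/N$ harmless in the limit. Beyond this routine estimate there is no substantial obstacle: the whole argument reduces to Birkhoff's theorem together with the hypothesis that every branch is quasi-generic, and the $3\rho$ in the conclusion is simply a comfortable buffer around the sharper bound $\rho$ produced by the block decomposition.
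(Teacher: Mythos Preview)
Your argument is correct. The paper does not give its own proof of this proposition; it simply says ``We refer the reader to \cite{luzzatto.sanchez} for details.'' Your block-decomposition approach---picking a Birkhoff-generic point for $\mu_{\Omega}$ inside $\Omega^{*}$, splitting its forward orbit into consecutive segments governed by the quasi-generic branches, and passing to the limit---is exactly the natural strategy and is essentially the argument carried out in that reference. Two minor remarks: (i) your justification that $\mu_{\Omega}(\Omega^{*})>0$ via the finite union $\Omega=\bigcup_{i}\bigcup_{j=0}^{R_{i}-1}f^{j}(\Omega^{*}_{i})$ is the right one, and in fact yields $\mu_{\Omega}(\Omega^{*})\geq 1/R_{\max}$; (ii) the ``In particular'' clause about all of $\mathcal{M}_{f}(\Omega(\rho,s))$ follows immediately from your bound $\leq\rho$ for ergodic measures by ergodic decomposition and convexity of the set $\{\nu:|\int\psi_{i}\,d\nu-\int\psi_{i}\,d\mu|\leq\rho,\ i\leq s\}$, which you implicitly acknowledge when noting that $3\rho$ is just a buffer over the sharper $\rho$.
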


We refer the reader to \cite{luzzatto.sanchez} for details.

\subsection{Nonuniform hyperbolicity and pseudo-Markov coverings}\label{subsec:pesin.theory}

In this section we collect some facts on nonuniformly hyperbolic dynamics necessary to prove Theorem A.

Let $f$ be a $C^r$ $r \geq 1$ diffeomorphism of a compact manifold $M$. We say that a point $x$ is \emph{Oseledec regular} if there 
exists numbers $\chi_1(x) < \cdots < \chi_{k(x)}(x)$ and a sum direct decomposition into subspaces $T_xM = \bigoplus_{i=1}^{k(x)}{E_i(x)}$ 
such that
$$
\lim_{n \to \pm\infty}\dfrac{\log\|Df^n(x)v\|}{n} = \chi_i(x) \quad\forall \ v \in E_i(x) - \{0\}.
$$
Notice that if $x$ is regular then so is $f^n(x)$ for every $n \in \enteros$ and therefore we can speak of a regular orbit.

Oseledec's theorem (see \cite{barreira.pesin} and \cite{katok.mendoza}) says that the set of \emph{regular points} $\Sigma$ is a Borel 
subset of total probability i.e. it has 
$\mu(\Sigma) = 1$, for every $f$-invariant Borel probability $\mu$. Moreover, the functions $\chi_i = \chi_i(x)$, $k = k(x)$, $E_i(x)$ are 
$f$-invariant and 
Borel measurable and the angle between the subspaces $E_i(x)$ decays subexponentially along the orbits, that is,
\begin{equation}
\lim_{n \to \pm\infty}\dfrac{\log\angle(E_S(f^n(x)),E_{N-S}(f^n(x))}{|n|} = 0
\end{equation}
for every finite subset $S \subset N := \{1, \cdots , k(x)\}$, where
$$
E_S(x) := \bigoplus_{i \in S}E_i(x).
$$

Given an \( f \)-invariant Borel probability hyperbolic measure \( \mu \) then the Lyapunov exponents \( \chi_i(x) \) are well defined for 
every 
\( x\in\Sigma \) and they are non-zero. Moreover, if \( \mu \) is ergodic there exists a constant $\chi $ 
satisfying
 \begin{equation}\label{eq:chi}
  \inf\{|\chi_i(x)|: x\in \Sigma, i = 1, \cdots , k(x)\}> \chi > 0 \quad\text{for}\quad \mu-a.e.
  \end{equation}
Then, for all  sufficiently small $\epsilon > 0$ such that \( \chi > \epsilon \) by Oseledec's theorem there exist measurable 
$Df$-invariant decompositions
$$
T_xM = E^s(x) \oplus E^u(x),
$$
and, for every $\epsilon > 0$, tempered Borel measurable functions $C_{\epsilon}, K_{\epsilon} : \Sigma \to (0,+\infty)$ with subexponential growth such that
$$
\begin{cases}
\|Df^n(x)v\|    \leq C_{\epsilon}(x)e^{n(-\chi+\epsilon)}\|v\|  & \ \forall \ v \in E^s(x) \ \forall \ n \geq 0 \\
\|Df^{-n}(x)v\| \leq C_{\epsilon}(x)e^{n(-\chi+\epsilon)}\|v\|  & \ \forall \ v \in E^u(x) \ \forall \ n \geq 0 \\
\end{cases}
$$
and $\angle(E^s(x),E^u(x)) \geq K_{\epsilon}(x)$, where
$$
E^s(x) := \bigoplus\limits_{\chi_i(x) < 0}E_i(x) \quad\text{and}\quad E^u(x) := \bigoplus\limits_{\chi_i(x) > 0}E_i(x).
$$
By tempered we mean slowly growing and/or decay, that is, 
$$
(1+\epsilon)^{-1} \leq \dfrac{C_{\epsilon}(f(x))}{C_{\epsilon}(x)}, \dfrac{K_{\epsilon}(f(x))}{K_{\epsilon}(x)} \leq 1+\epsilon, \quad\mu-a.e.
$$
This follows from the tempering-kernel lemma \cite[Lemma S.2.12]{katok.mendoza}.

We remark that the properties given above as a consequence of the hyperbolicity of \( \mu \) can also be formulated without any reference 
to the measure \( \mu \) and are essentially nonuniform versions of standard uniformly hyperbolic conditions, 
see \cite[Theorem 6.6]{barreira.pesin}. 

We refer the reader to \cite{barreira.pesin} and \cite{katok.mendoza} for an exposition of the ergodic theory of smooth dynamical systems 
with hyperbolic behavior.

We now introduce a standard ``filtration'' of \( \mu \) almost every point which gives a countable number of nested, 
uniformly hyperbolic (but not \( f \)-invariant) sets, often referred to as ``Pesin sets'', whose points admit uniform hyperbolic bounds 
and uniform 
lower bounds on the sizes of the local stable and unstable manifolds.

For  $\chi > 0$ as in \eqref{eq:chi} above, and every positive integer $\ell > 0$, we define a (possibly empty) 
compact (not necessarily invariant) set $\Lambda_{\chi,\ell} \subset M$ such that $E^s|_{\Lambda_{\chi,\ell}}$ and  $E^u |_{\Lambda_{\chi,\ell}}$ vary continuously with the point $x \in \Lambda_{\chi,\ell}$ and  such that 
$$
\begin{cases}
\|Df^n(x)v\| \leq {\ell}e^{-n\chi}\|v\| & \|Df^{-n}(x)v\| \geq \ell^{-1}e^{n\chi}\|v\| \ \forall \ v \in E^s(x) \ \forall \ n \geq 0 \\
\|Df^{-n}(x)v\| \leq {\ell}e^{-n\chi}\|v\| & \|Df^n(x)v\| \geq \ell^{-1}e^{n\chi}\|v\| \ \forall \ v \in E^s(x) \ \forall \ n \geq 0.
\end{cases}
$$
Moreover,  the angles between the stable and unstable subspaces satisfy
$$
\angle(E^s(x),E^u(x)) \geq \ell^{-1}
$$
for every  $x \in \Lambda_{\chi,\ell}$. As the rate of hyperbolicity of $\mu$ is bounded from below by $\chi > 0$ 
we have 
$$
\mu(\Lambda_{\chi,\ell}) \to 1 \quad\text{as}\quad \ell \to +\infty.
$$

\begin{definition}
We say that $\R(x)$ is a rectangle centered at $x$ if it is the image of an embedding $e_x : I^n \to M$ such that $e_x(0) = x$, where $I = [-1,1]$.
\end{definition}

By \cite[Theorem S.3.1]{katok.mendoza} for every $\epsilon > 0$ and for $\mu$-a.e. $x \in \Sigma$ there exists a 
local coordinate $\phi_x : B(0,\rho(x)) \to M$, named \emph{Lyapunov charts}, such that the representative $f_x := \phi_{f(x)}^{-1} \circ f \circ \phi_x$ of 
$f$ in the new coordinates is a small perturbation of a hyperbolic linear isomorphism $Df_x(0): \real^m \to \real^m$ preserving 
the decomposition $\real^m = \real^s \oplus \real^u$ such that:
\begin{itemize}
\item $D\phi_x(0)\real^i = E_x^i$ for $i = s,u$; 
\item $e^{\chi-\epsilon} \leq \|Df_x(0)|\real^s\|, \|(Df_x(0))^{-1}|\real^u\| \leq e^{\chi+\epsilon}$;
\item $f_x = Df_x(0) + h_x$ where $\|h_x\|_{C^1} = \sup_{z \in I^n}\max\{\|h_x(z)\|,\|Dh_x(z)\|\} < \epsilon$
\end{itemize}
This is consequence of the $C^{1+\alpha}$ hypotheses and the hyperbolicity of the orbit. Nouniformity is captured by the slowly fluctuations 
of the radius $\rho(x)$ along the orbit, i.e.
$$
\lim_{n \to \pm\infty}\dfrac{\log\rho(f^nx)}{|n|} = 0 \quad\mu-a.e.
$$
Let 
$[-t(x),t(x)]^m \subset B(0,\rho(x))$ the largest $m$-cube contained in $B(0,\rho(x))$ and let $\sigma_x : [-1,1]^m \to [-t(x),t(x)]^m$ 
a homothety. Then we introduce the modified Lyapunov chart
$$
e_x := \sigma_x \circ \phi_x : [-1,1]^m \to M.
$$

\begin{definition}
 We call $\R(x) := e_x([-1,1]^m)$, the image of the modified Lyapunov chart, a \emph{rectangle}. 
\end{definition}

Now we define admissible manifolds. They are good approximations to local stable and unstable invariant manifolds.

\begin{definition}
A {\em admissible stable manifold} is a graph $\gamma^s = \{e_x(z,\hat{s}(z)): z \in I^s\}$, 
where $\hat{s} \in C^1(I^s,I^u)$ is a smooth map with $Lip(\hat{s}) := \sup_{z \in I^s}\|D\hat{s}(z)\| \leq \gamma$. We define similarly 
{\em admissible unstable manifolds}: $\gamma^u = \{e_x(\hat{u}(z),z): z \in I^u\}$, 
where $\hat{u} \in C^1(I^u,I^s)$ has $Lip(\hat{u}) := \sup_{z \in I^u}\|D\hat{u}(z)\| \leq \gamma$
\end{definition}

Admissible manifolds endow $\R(x)$ with a product structure: any given pair of admissible manifolds $\gamma^s$ and $\gamma^u$ intersects 
transversally at 
a unique point with an angle bounded from below. Moreover, the map $(\gamma^s, \gamma^u) \to \gamma^s\cap\gamma^u$ so defined satisfies a 
Lipschitz 
condition  \cite[\S 3.b]{katok.mendoza} and \cite [\S 8]{barreira.pesin}. The transversal structure of the admissible stable and unstable 
manifolds inside 
a rectangle \( \R \) allows us to define the notion of {\em admissible stable and unstable cylinders}. 
An {\em admissible stable cylinder} \( S \subseteq \R\) is a non empty compact subset of $\R$ with piecewise smooth boundaries 
admitting a foliation by admissible stable manifolds which stretch fully across the rectangle \( \R \) and which is the closure of its 
interior points. {\em Admissible unstable cylinder} \( U \subseteq \R\) are defined similarly with a foliation 
a foliation by admissible unstable manifolds stretching fully across the rectangle \( \R \).

The notion of admissible manifold is related to certain  cone fields $\K^s$, $\K^u$.
For every $z \in \R$ we define $\K^s_z \subset T_zM$ as the image under $De_x(p)$ evaluated at $p(z) = e^{-1}_x(z) \in I^n$, of the 
cone of width $\gamma$ 'centered' at $\real^s\oplus\{0\}$, that is, the set of vectors in $\real^n$ making an angle bounded by $\gamma$ with 
$\real^s\oplus\{0\}$. 
We define $\K^u_z \subset T_zM$ likewise considering a cone of width $\gamma$ 'centered' at $\{0\}\oplus\real^u$. 
Notice that admissible manifolds are exactly those smooth graph-like submanifolds whose tangent spaces rest inside stable and unstable cones.

We say that a $C^1$ diffeomorphism $g : S \to U$ between admissible cylinders is {\em hyperbolic} if it preserves the cone fields $\K^s$ and $\K^u$, that is,
$$
Dg(z)\K^u_z \subset int\,\K^u_{g(z)} \ \forall \ z \in S\quad\text{and}\quad Dg^{-1}(z)\K^s_z \subset int\,\K^s_{g^{-1}(z)}  \ \forall \ z \in U,
$$

\begin{definition}\label{definition.hyperbolic.branch}
Let $\R$ and $\subR$ be regular rectangles. If some iterate \( f^{m} \) maps an admissible stable cylinder  $S \subset \R$ diffeomorphically 
and hyperbolically 
to an admissible unstable cylinder $U \subset \subR$, we shall say that
$$
f^m : S \to U
$$
is a {\em hyperbolic branch}.
\end{definition}

The next pseudo-Markov property of coverings by Lyapunov rectangles is the key to the construction of hyperbolic $f$-invariant Cantor sets 
approximating the measure 
$\mu$ and satisfying (\ref{main.1}) in Theorem A.
 
\begin{proposition}\label{markov.covers}
For every $\delta > 0$ and for every Pesin set $\Lambda$ there exists constants $\kappa > 0$, $\lambda = \lambda(\chi) > 1$ only depending on $\chi > 0$, the lower 
bound for the Lyapunov exponents introduced in (\ref{eq:chi}), subsection \ref{subsec:pesin.theory}, and a finite covering by 
rectangles $\{\R_i = \R(p_i), p_i \in \Lambda; i = 1,  \cdots , t \}$ 
such that:
\begin {enumerate}
\item $\Lambda \subseteq \bigcup_{i=1}^t{B(p_i,\kappa)}$, \ and \ $B(p_i,\kappa) \subset int\,\R_i$;
\item $\diam(\R_i) \leq \delta$ for $i = 1 , \cdots , t$;
\item if $x \in \Lambda \cap B(p_i,\kappa)$ and $f^m(x) \in \Lambda \cap B(p_j,\kappa)$ then there exists an admissible stable cylinder 
$S_x \subset \R_i$ containing $x$ and an admissible unstable cylinder $U_{f^n(x)} \subset \R_j$ containing $f^m(x)$ such that 
$f^m: S_x \to U_{f^n(x)}$ is a hyperbolic branch with nonlinear rate of expansion bounded from below by $\lambda > 1$, that is:
\begin{equation}\label{nonlinear.expansion.1}
d_{W'_k}(f^k(w),f^k(w')) \geq \lambda^k{d_W(w,w')} \quad\text{for}\ k = 1, \cdots m, \ \forall \ w\,,\,w' \in W \cap S_x, 
\end{equation}
where $\dist_{W}$ and $\dist_{W'_k}$ is the metric induced by the Lyapunov charts on $W \in \Gamma^u(\R_i)$ and 
$W'_k = f^k(W \, \cap \, S)$ and $\Gamma^u(\R_i)$ is the set of admissible unstable manifolds in $\R_i$ and $m > 0$ is the return time of 
$x$. Similarly
\begin{equation}\label{nonlinear.expansion.2}
\dist_{W'_k}(f^{-k}(w),f^{-k}(w')) \geq \lambda^k{\dist_W(w,w')} \quad\text{for}\ k = 1, \cdots m, \ \forall \ w\,,\,w' \in W \cap U_{f^n(x)}, 
\end{equation}
for every $W \in \Gamma^s(\R_j)$ and $W'_k = f^{-k}(W \, \cap \, U_{f^n(x)})$, where $\Gamma^s(\R_j)$ is the set of admissible stable 
manifolds 
in $\R_j$;
\item $\diam(f^k(S_x)) \leq \delta$, for $0 \leq k \leq m$;
\end {enumerate}
\end{proposition}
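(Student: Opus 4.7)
The plan is to build the covering from modified Lyapunov charts at points of $\Lambda$, construct the hyperbolic branches via a graph-transform / cone invariance argument along the orbit of $x$, and extract the quantitative bounds from chart-level uniform hyperbolic estimates. First I would set up the covering: on the compact Pesin set $\Lambda$ the distributions $E^s, E^u$ vary continuously and the chart radius $\rho(x)$ is uniformly bounded below, so after a preliminary rescaling one may assume $\diam \R(x) \leq \delta$ for every $x \in \Lambda$. Pick finitely many centers $p_1,\ldots,p_t \in \Lambda$ such that $\{B(p_i,\kappa)\}$ covers $\Lambda$ with $\kappa > 0$ small enough to force $B(p_i,\kappa) \subset \text{int}\,\R_i$ for the rectangles $\R_i := \R(p_i)$; items (1) and (2) are then immediate.

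For item (3), fix $x \in \Lambda \cap B(p_i,\kappa)$ with $f^m(x) \in \Lambda \cap B(p_j,\kappa)$. In Lyapunov coordinates $f$ is a small perturbation of a hyperbolic linear isomorphism preserving narrow cone fields $\K^s, \K^u$, and since the orbit $x, f(x), \ldots, f^m(x)$ of an Oseledec regular point admits Lyapunov charts at every intermediate time, forward iterates of vectors in $\K^u$ lie in $\K^u$ at the next time, with the dual behaviour for $\K^s$ backward. Set $\lambda := e^{\chi - 2\epsilon}$ for $\epsilon > 0$ small, let $U_{f^m(x)} \subset \R_j$ be an admissible unstable cylinder through $f^m(x)$ crossing $\R_j$ fully, and define $S_x$ as the connected component of $f^{-m}(U_{f^m(x)}) \cap \R_i$ containing $x$. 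A standard graph-transform argument, based on the cone invariance and the hyperbolic rates $\|Df|_{\K^s}\| \leq e^{-\chi+\epsilon}$, $\|Df^{-1}|_{\K^u}\| \leq e^{-\chi+\epsilon}$ in each chart, forces $S_x$ to be foliated by admissible stable manifolds crossing $\R_i$ fully; hence $S_x$ is an admissible stable cylinder, $f^m\colon S_x \to U_{f^m(x)}$ is a hyperbolic branch as in Definition \ref{definition.hyperbolic.branch}, and iterating the chart-level expansion $k$ times yields the inequalities (\ref{nonlinear.expansion.1}) and dually (\ref{nonlinear.expansion.2}).

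For item (4), calibrate the unstable width of $U_{f^m(x)}$ to be of order $\delta$, so that $S_x$ has unstable width of order $\lambda^{-m}\delta$; the unstable diameter of $f^k(S_x)$ is then at most $\lambda^{k-m}\delta \leq \delta$, while its stable diameter is controlled by the exponentially contracted stable width of $\R_i$. The hard part is controlling the geometry at the intermediate times $1 \leq k \leq m-1$, where $f^k(x)$ need not lie in the uniform Pesin block $\Lambda$: one must appeal to Oseledec regularity of the orbit to obtain Lyapunov charts at those times (with only subexponentially fluctuating radii) and exploit the fact that the tube $\bigcup_k f^k(S_x)$ is thin enough, by (4) itself, for the cone invariance and step-wise expansion rate $\lambda$ to transfer to every intermediate time via $C^{1+\alpha}$ bounded distortion. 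Reconciling these chart-dependent estimates with the intrinsic metric appearing in (\ref{nonlinear.expansion.1})--(\ref{nonlinear.expansion.2}) is where the bulk of the technical work lies.
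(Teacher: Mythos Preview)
The paper does not give its own proof of this proposition: immediately after the statement it simply refers the reader to \cite[Definition S.4.15]{katok.mendoza} and \cite[Theorem S.4.16]{katok.mendoza}. So there is no in-paper argument to compare your proposal against.

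That said, your outline is essentially the standard route taken in the cited reference: build the rectangles from (rescaled) Lyapunov charts on the compact Pesin block, use the chart-level hyperbolic linearization plus cone invariance and a graph-transform to produce the admissible stable/unstable cylinders and the hyperbolic branch, and derive the expansion rates $\lambda = e^{\chi - O(\epsilon)}$ from the stepwise estimates in charts. You also correctly flag the one genuinely delicate point --- controlling the geometry of $f^k(S_x)$ at intermediate times $0 < k < m$ where $f^k(x)$ need not lie in $\Lambda$ --- and the right mechanism for handling it (tempered chart radii along regular orbits together with $C^{1+\alpha}$ distortion). So your proposal is consistent with the source the paper defers to; there is no discrepancy to report, only the observation that the paper itself treats this proposition as a black box.
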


We emphasize that the formation of hyperbolic branches occurs for every return from $\Lambda \cap B(p_i,\kappa)$ to 
$\Lambda \cap B(p_j,\kappa)$ which are not necessarily first return times.

\begin{definition}
We call $\mathcal{R} = \{R_i\}$ a $(\delta,\kappa, \lambda)$-Markov covering of $\Lambda$.
\end{definition}

See \cite[Definition S.4.15]{katok.mendoza} and \cite[Theorem S.4.16]{katok.mendoza}. 

\section{Proof of Theorem A: first step, choosing $\Omega$}\label{sec:proof.main.technical.lemma.1}

Theorem A's (\ref{main.1}) follows immediatly from next lemma by taking $\Omega_n = \Omega(\rho_n,s_n,\phi)$, where $\rho_n \downarrow 0^+$ 
and $s_n \to +\infty$ are suitable sequences.

\begin{lemma}\label{main.technical.lemma}
Let $\rho, s > 0$, $\phi$ continuous and $\mu$ an ergodic non atomic hyperbolic Borel probability. Then, there exist a hyperbolic basic set  
$$
\Omega = \Omega(\rho,s,\phi)
$$
such that:
\begin{enumerate}
\item every ergodic measure $\nu$ supported on $\Omega$ belongs to the weak-* open neighborhood ${\mathcal O}(\mu, \rho,s)$;
\item the following estimate holds
\begin{equation}\label{single.potential.estimative}
\dfrac{\rho\inf\phi}{1+\rho} + \dfrac{P_{\mu}(\phi)-3\rho}{1+\rho} \leq P(f|\Omega,\phi) \leq P_{\mu}(\phi)+2\rho + \rho\sup\phi;
\end{equation}
\item and the rate of hyperbolicity of $\Omega$ is bounded from below by a constant $\log\lambda > 0$.
\end{enumerate} 
\end{lemma}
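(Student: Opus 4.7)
My plan is to construct $\Omega$ as the $f$-invariant saturate of a carefully chosen Alekseev set $\Omega^{*}$, following the scheme introduced in \cite{luzzatto.sanchez}. The two enabling ingredients are already in place: the pseudo-Markov covering (Proposition \ref{markov.covers}), which turns every return of a regular point to a covering rectangle into a hyperbolic branch of expansion rate $\lambda>1$; and the strong approximation property (Proposition \ref{strong.approximation.property}), which reduces assertion (1) of the lemma to the condition that each branch defining $\Omega^{*}$ be $(\rho',s)$-quasi-generic for $\mu$ with $\rho'\leq\rho/3$.

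I would begin by fixing a Pesin set $\Lambda=\Lambda_{\chi,\ell}$ with $\mu(\Lambda)>1-\eta$ for $\eta=\eta(\rho)$ small enough, and a $(\delta,\kappa,\lambda)$-Markov covering $\{\R_1,\ldots,\R_t\}$ of $\Lambda$, with $\delta$ smaller than the modulus of continuity of $\phi$ at scale $\rho'$. Applying Birkhoff's ergodic theorem to $\phi$ and to the dense collection $\{\psi_i\}_{i\leq s}$ defining the weak-$*$ topology, together with the Shannon--McMillan--Breiman theorem (this is the content of Proposition \ref{free.energy.prop}, the weighted rate-of-growth reformulation of the free energy), I would isolate for each large $n$ a "good" subset $G_n\subset\Lambda$ such that every $x\in G_n$ is $(\rho',s,n)$-quasi-generic, has $|\tfrac{1}{n}\sum_{k=0}^{n-1}\phi(f^k(x))-\int\phi\,d\mu|<\rho'$, and has $f^n(x)\in\Lambda$. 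A maximal $(n,\delta)$-separated subset $E_n\subset G_n$ would then satisfy
\[
\sum_{x\in E_n}\exp\!\Bigl(\sum_{k=0}^{n-1}\phi(f^k(x))\Bigr)\ \geq\ e^{n(P_\mu(\phi)-c\rho')}
\]
for a universal constant $c$, together with the matching upper bound obtained by dominating weights by $e^{n(\sup\phi)}$ times the Bowen cardinality.

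Next I would extract the Alekseev geometry by a double pigeonhole: among the $t^2$ possible pairs $(\R_{i_0},\R_{i_1})$ with $x\in\R_{i_0}$ and $f^n(x)\in\R_{i_1}$, some pair captures a positive fraction of $E_n$; composing with a single fixed connecting branch from $\R_{i_1}$ back to $\R_{i_0}$ (which exists by ergodicity and Proposition \ref{markov.covers}) allows me to assume $i_0=i_1$. A finite subfamily of the resulting returns then defines hyperbolic branches $f^{R_j}\!:S_j\to U_j$ all inside a common rectangle $\R_{i_0}$; since each $S_j$ is a full-height stable cylinder and each $U_j$ a full-width unstable cylinder of $\R_{i_0}$, the transverse crossings required by the definition of an Alekseev set are automatic. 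This produces $\Omega^{*}$, and Lemma \ref{alekseev.set} together with the proposition following it gives that $\Omega=\bigcup_j\bigcup_{k=0}^{R_j-1}f^k(\Omega^{*}_j)$ is a basic set. Assertion (3) is then inherited directly from the uniform expansion rate $\log\lambda$ in Proposition \ref{markov.covers}, and (1) follows from Proposition \ref{strong.approximation.property} provided $\rho'\leq\rho/3$ has been chosen from the start.

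The main obstacle is the two-sided pressure estimate \eqref{single.potential.estimative}. The lower bound requires producing periodic points of $\Omega$ that reproduce the weight $\sum_{E_n}\exp(S_n\phi)$ up to a controlled loss: this is done via a shadowing argument that replaces each quasi-generic orbit segment by a genuine periodic point of $f$ staying $\delta$-close to it, whose Birkhoff sum differs by at most $O(\rho'\,n)+n\,\omega(\phi,\delta)$ by quasi-genericity and the continuity modulus of $\phi$. The upper bound is the subtler direction, demanding that the total weight of $f$-periodic orbits of period at most $n$ inside $\Omega$ be bounded by $e^{n(P_\mu(\phi)+O(\rho'))}$; this amounts to a word-counting estimate on admissible concatenations of the variable-length branches. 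The $(1+\rho)$ factors appearing in \eqref{single.potential.estimative} are precisely the bookkeeping signature of these variable return times: a symbolic word of length $k$ in the Alekseev coding corresponds to an $f$-orbit of actual length in the window $k\bar R\,(1\pm\rho)$, where $\bar R$ denotes the average of the $R_j$'s, so dividing the logarithmic count by true $f$-time introduces the factor $1/(1+\rho)$. Establishing these two bounds rigorously is what sections \ref{sec:proof.main.technical.lemma.2} and \ref{sec:proof.main.technical.lemma.3} will carry out.
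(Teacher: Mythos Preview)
Your overall scheme matches the paper's: fix a Pesin set and a pseudo-Markov covering, select a weighted $(\delta,n)$-separated set of quasi-generic points, localize to a single rectangle by pigeonhole, build an Alekseev set from the resulting hyperbolic branches, and read off (1) and (3) from Propositions~\ref{strong.approximation.property} and~\ref{markov.covers}. The deferral of the two-sided pressure bound to a shadowing/periodic-orbit count is also exactly what the paper does in Sections~\ref{sec:proof.main.technical.lemma.2}--\ref{sec:proof.main.technical.lemma.3}.

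There is one genuine divergence worth flagging. To force all branches into a single rectangle you propose a double pigeonhole on pairs $(\R_{i_0},\R_{i_1})$ followed by composition with a \emph{fixed} connecting branch $\R_{i_1}\to\R_{i_0}$. The paper instead invokes a return lemma (Lemma~\ref{return.lemma}): by ergodicity, for $n$ large a definite fraction of $\Lambda_0\cap B(p_i,\kappa)$ returns to the \emph{same} ball $B(p_i,\kappa)$ at some time $R(x)\in[n,(1+\rho)n]$, and one then pigeonholes only over the single index $i$. This is what produces branches with \emph{variable} return times in the window $[n,(1+\rho)n]$, and it is precisely this window that generates the $(1+\rho)$ denominators in \eqref{single.potential.estimative}. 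Your construction, by contrast, yields branches of \emph{constant} length $n+m$ (with $m$ the length of the fixed connector), so your closing paragraph misattributes the source of the $(1+\rho)$: in your setup there is no such factor, and the pressure estimate would in fact come out cleaner. Either route is viable; if you keep the connecting-branch variant you must check that composition preserves $(\rho',s)$-quasi-genericity and the Birkhoff weight up to $o(n)$ (immediate since $m$ is fixed and $n\to\infty$), and you should then expect bounds of the form $P_\mu(\phi)-c\rho\le P(f|\Omega,\phi)\le P_\mu(\phi)+c\rho$ without the $(1+\rho)^{-1}$.
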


Here $\lambda = \Lambda(\chi) > 1$ is the rate of expansion along local admissible unstable manifolds introduced in proposition \ref{markov.covers}, 
subsection \ref{subsec:pesin.theory}.

We dedicate the rest of this paper to prove Lemma \ref{main.technical.lemma}. In this section we prove item $(1)$ and $(3)$, while pressure 
estimates from $(3)$ take up the final two sections. 

Let us start recalling the following terminology:
\begin{enumerate}
\item $E \subset X$ is an {\em $(\epsilon,n)$-spanning set in $X$} if for every $x \in X$ there exists $y \in E$ such that 
$d(f^k(x),f^k(y)) \leq \epsilon$, 
for every $0 \leq k \leq n-1$;
\item $E \subset X$ is an {\em $(\epsilon,n)$-separated set in X} if for every pair of different points $x \not= y$ in $E$ it holds 
$d(f^k(x),f^k(y)) > \epsilon$ 
for some $0 \leq k \leq n-1$;
\item given an $f$-invariant Borel probability $\mu$ and a positive number $0 < \alpha < 1$, we say that $E$ is an 
$(\epsilon,n,\alpha)$-spanning set for $\mu$ if
$$
\mu\left(\bigcup_{x \in E}B(x,\epsilon,n)\right) \geq \alpha,
$$
where
$$
B(x,\epsilon,n) := \{\,y \in X \,:\, \dist(f^j(x),f^j(y)) < \epsilon, \ j = 0, \cdots , n-1\,\}.
$$
\end{enumerate}
$E$ is $(\epsilon,n)$-spanning in $X$ if and only if $M \subset \bigcup_{x \in E}B(x,\epsilon,n)$. Also notice that any maximal 
$(\epsilon,n)$-separated set in $X$ is $(\epsilon,n)$-spanning.

\begin{definition}
Let $f : X \to X$ continuous and $\mu$ and $f$-invariant Borel probability an $\phi$ continuous. We define the 
{\em measure-theoretical pressure of $\phi$ w.r.t. $\mu$} as  
\begin{equation}\label{additive.measure.theoretical.pressure}
P_{\mu}(\phi) := \lim_{\alpha \to 0^+}\lim_{\epsilon \to 0^+}\lim_{n \to +\infty}\dfrac{1}{n}\log\left(\inf_E\left\{\sum_{x \in E}\exp{S_n\phi(x)}\right\}\right),
\end{equation}
where $\mu$ is an $f$-invariant Borel probability and the \emph{infimum taken over $(\epsilon,n,\alpha)$-spanning subsets $E \subset M$}.
\end{definition}

\begin{proposition}\label{free.energy.prop}
Let $f : X \to X$ a continuous self map of a compact metric space $(X,d)$, $\phi$ continuous and $\mu \in {\mathcal M}_f$ an ergodic 
$f$-invariant Borel probability. Then, for every $0 < \alpha < 1$,
\begin{equation}\label{additive.measure.theoretical.pressure.1}
P_{\mu}(\phi) = \lim_{\epsilon \to 0^+}\lim_{n \to +\infty}\dfrac{1}{n}\log\left(\inf_E\left\{\sum_{x \in E}\exp{S_n\phi(x)}\right\}\right) = h(\mu) + \int\phi{d\mu},
\end{equation}
\emph{infimum taken over $(\epsilon,n,\alpha)$-spanning subsets $E \subset M$}.
\end{proposition}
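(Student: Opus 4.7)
My plan is to deduce this from Katok's entropy formula combined with the Birkhoff ergodic theorem applied to $\phi$, plus the uniform continuity of $\phi$ to transfer information between a Bowen ball and its center. Recall Katok's entropy formula: for any $0<\alpha<1$,
\[
h(\mu)=\lim_{\epsilon\to 0^+}\limsup_{n\to\infty}\frac{1}{n}\log N_\mu(\epsilon,n,\alpha)=\lim_{\epsilon\to 0^+}\liminf_{n\to\infty}\frac{1}{n}\log N_\mu(\epsilon,n,\alpha),
\]
where $N_\mu(\epsilon,n,\alpha)$ is the minimal cardinality of an $(\epsilon,n,\alpha)$-spanning set. I would fix $\delta>0$ and, by uniform continuity of $\phi$, choose $\epsilon_0>0$ so that $\dist(f^k(x),f^k(y))<\epsilon$ for $k=0,\dots,n-1$ implies $|S_n\phi(x)-S_n\phi(y)|\leq n\eta(\epsilon)$ with $\eta(\epsilon)\to 0$. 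By Birkhoff's theorem, the set
\[
G_n=\left\{x:\Big|\tfrac{1}{n}S_n\phi(x)-\textstyle\int\phi d\mu\Big|<\delta\right\}
\]
satisfies $\mu(G_n)\to 1$.

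The lower bound: let $E$ be any $(\epsilon,n,\alpha)$-spanning set and restrict to $E_1=\{x\in E:B(x,\epsilon,n)\cap G_n\neq\emptyset\}$. For $n$ large enough $\mu(G_n^c)<\alpha/2$, hence the Bowen balls centered at $E_1$ still cover a set of $\mu$-measure at least $\alpha/2$; in particular $|E_1|\geq N_\mu(\epsilon,n,\alpha/2)$. For every $x\in E_1$ pick $y\in B(x,\epsilon,n)\cap G_n$ to get $S_n\phi(x)\geq S_n\phi(y)-n\eta(\epsilon)\geq n(\int\phi d\mu-\delta-\eta(\epsilon))$. Summing,
\[
\sum_{x\in E}\exp S_n\phi(x)\geq N_\mu(\epsilon,n,\alpha/2)\cdot e^{n(\int\phi d\mu-\delta-\eta(\epsilon))},
\]
and taking $\frac{1}{n}\log$, then $n\to\infty$ and $\epsilon\to 0^+$, Katok's formula yields the lower bound $h(\mu)+\int\phi d\mu-\delta$, after which $\delta\to 0$ finishes this half.

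The upper bound: this is the subtler direction since we must \emph{build} an economical spanning set whose centers sit in $G_n$. Choose $\alpha_0\in(\alpha,1)$ and, for $n$ large, pick a minimal $(\epsilon/2,n,\alpha+(1-\mu(G_n)))$-spanning set $E$ with $|E|\leq N_\mu(\epsilon/2,n,\alpha_0)$. Since the $(\epsilon/2,n)$-Bowen balls centered at $E$ cover measure at least $\alpha+(1-\mu(G_n))$, the subcollection meeting $G_n$ covers measure at least $\alpha$. For each such $x\in E$, replace $x$ by an arbitrary $y_x\in B(x,\epsilon/2,n)\cap G_n$; by the triangle inequality for the Bowen metric, $B(y_x,\epsilon,n)\supseteq B(x,\epsilon/2,n)$, so the new centers $E'$ form an $(\epsilon,n,\alpha)$-spanning set with $|E'|\leq|E|$, every center in $G_n$. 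Hence
\[
\inf_E\sum_{x\in E}\exp S_n\phi(x)\leq\sum_{y\in E'}\exp S_n\phi(y)\leq N_\mu(\epsilon/2,n,\alpha_0)\cdot e^{n(\int\phi d\mu+\delta)},
\]
and the same limiting procedure together with Katok's formula gives the matching upper bound.

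The main obstacle, as I see it, is the bookkeeping in the upper-bound construction: ensuring that the spanning property with threshold $\alpha$ is preserved under the center-replacement trick while simultaneously controlling the cardinality by a Katok-type estimate at a comparable scale. The doubling $\epsilon\to\epsilon/2$ is harmless because both limits in $\epsilon$ exist and coincide by Katok's theorem, and the $n$-dependent threshold $\alpha+(1-\mu(G_n))$ is absorbed into a uniform $\alpha_0<1$. The independence of the final value from $\alpha$ falls out automatically since $h(\mu)$, via Katok's formula, is $\alpha$-independent.
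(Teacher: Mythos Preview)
Your argument is correct. The paper does not supply its own proof of this proposition; it simply cites \cite[Theorem 1.1]{mendoza.1988}. What you have written is essentially the standard derivation of Mendoza's formula from Katok's entropy formula together with Birkhoff's ergodic theorem, and the details you give (the center-replacement trick at scale $\epsilon/2$ for the upper bound, the restriction to balls meeting $G_n$ for the lower bound) are the natural ones. One minor remark: your bounds actually show that the $\liminf$ and $\limsup$ in $n$ agree, which justifies writing $\lim_{n\to+\infty}$ as in the statement; you might say this explicitly. Otherwise nothing is missing.
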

See \cite[Theorem 1.1]{mendoza.1988}.

The construction of $\Omega$ will rely upon proposition \ref{free.energy.prop}.

For this we need to fix $\alpha > 0$, $\delta > 0$, $n > 0$ and a finite $(\delta,n,\alpha)$-spanning subset $E_0$ such that each 
$x \in E_0$ is endowed with a hyperbolic branch $f^{R(x)} : S_x \to U_{f^n(x)}$ for a suitable return time to a hyperbolic Pesin set of 
quasi-generic points $\Lambda_0$. Then we choose a suitable subset of those hyperbolic branches to generate a horseshoe with finitely many 
branches and variable return times $\Omega^*$ and then we prove that $\Omega = \bigcup_{n \in \enteros}f^n(\Omega^*)$, the 
$f$-invariant saturate of $\Omega^*$ satisfies the inequalities (\ref{single.potential.estimative}) in Lemma \ref{main.technical.lemma}.
\ 
\\
\\
{\bf First we fix once for all $\rho > 0$ and $s > 0$ and $\{\psi_i\}$ a countable dense subset of continuous functions}.
\ 
\\
\\
Then, we choose a hyperbolic Pesin set of quasi-generic points. For this we fix a hyperbolic Pesin set $\Lambda$ and define
\begin{equation}\label{quasi.generic}
\Lambda_N := \{ x \in \Lambda: \left|\sum_{k=0}^{n-1}\psi_i(f^k(x)) - \int\psi_i{d\mu} \right| < \rho/2 \quad\forall i \leq s \quad\forall n \geq N\}
\end{equation}
Then we pick up a large integer $N_0 > 0$ such that $\Lambda_0 := \Lambda_{N_0}$ has
\begin{equation}\label{definition.quasigeneric.hyperbolic.set}
 \mu(\Lambda_0) \geq \dfrac{\mu(\Lambda)}{2}
\end{equation}
This is possible since,
$$
{\mathcal G}_{\rho,s,N} = \{ x \in M: \left|\sum_{k=0}^{n-1}\psi_i(f^k(x)) - \int\psi_i{d\mu} \right| < \rho/2 \quad\forall i \leq s \quad\forall n \geq N\}
$$
is an increasing sequence and $\mu({\mathcal G}_{\rho,s,N}) \uparrow 1$ when $N \to +\infty$.
\ 
\\
\\
{\bf Then fix $\alpha > 0$ defining}
\begin{equation}\label{defining.alpha}
\alpha := \dfrac{\mu(\Lambda)}{4}
\end{equation}

Next step is to {\bf fix a small precision $\delta > 0$}:

\begin{lemma}
There exists $\delta(\rho,s) > 0$ such that, for every $0 < \delta < \delta(\rho,s)$ it holds
\begin{equation}\label{definition.delta.1}
 \forall \ x,y \in M: \quad d(x,y) < \delta \Longrightarrow |\psi_{i}(x)-\psi_{i}(y)|  <  \rho/2, \quad\quad \forall \ i \leq s,
\end{equation}
\begin{equation}\label{definition.delta.2}
 \forall \ x,y \in M: \quad d(x,y) < \delta \Longrightarrow |\phi(x)-\phi(y)| < \rho,
\end{equation}
and
\begin{equation}\label{definition.delta.3}
\left|\lim_{n \to +\infty}\dfrac{1}{n}\log\left(\inf_E\left\{\sum_{x \in E}\exp{S_n\phi}\right\}\right) - P_{\mu}(\phi)\right| < \rho/4.
\end{equation}
infimum is taken over all the $(\delta,n,\alpha)$-spanning subsets $E$. 
\end{lemma}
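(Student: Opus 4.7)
The lemma splits cleanly into three independent requirements on $\delta$, and the minimum of the three thresholds will give $\delta(\rho,s)$. The first two items are pure uniform continuity statements, while the third is a direct unpacking of the definition of the measure-theoretical pressure combined with Proposition \ref{free.energy.prop}.

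For items (\ref{definition.delta.1}) and (\ref{definition.delta.2}), I would invoke compactness of $M$: the finite family $\{\psi_1,\dots,\psi_s,\phi\}\subset C^0(M)$ is uniformly equicontinuous, so there exist $\delta_1=\delta_1(\rho,s)>0$ with the property that $d(x,y)<\delta_1$ forces $|\psi_i(x)-\psi_i(y)|<\rho/2$ simultaneously for every $i\leq s$, and $\delta_2=\delta_2(\rho)>0$ with $|\phi(x)-\phi(y)|<\rho$ whenever $d(x,y)<\delta_2$. Both assertions then hold for every $\delta\le\min\{\delta_1,\delta_2\}$.

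For item (\ref{definition.delta.3}), recall that $\alpha>0$ has already been fixed in (\ref{defining.alpha}). Set
\[
Q(\epsilon) := \lim_{n\to+\infty}\frac{1}{n}\log\left(\inf_{E}\sum_{x\in E}\exp S_n\phi(x)\right),
\]
where the infimum runs over $(\epsilon,n,\alpha)$-spanning sets in $M$. Proposition \ref{free.energy.prop} asserts precisely that $Q(\epsilon)\to P_\mu(\phi)=h(\mu)+\int\phi\,d\mu$ as $\epsilon\to 0^+$ (for our fixed $\alpha$, since $0<\alpha<1$). Hence there exists $\delta_3=\delta_3(\rho,s,\phi,\mu)>0$ such that $|Q(\delta)-P_\mu(\phi)|<\rho/4$ for every $0<\delta<\delta_3$; this is exactly (\ref{definition.delta.3}). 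One has to be slightly careful that the limit in $n$ in the definition of $Q(\epsilon)$ actually exists (rather than a $\limsup$); this is part of the content of Proposition \ref{free.energy.prop} / \cite[Theorem 1.1]{mendoza.1988} and can be taken as granted here.

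Setting $\delta(\rho,s):=\min\{\delta_1,\delta_2,\delta_3\}$ yields the claim, since all three inequalities are monotone in $\delta$: shrinking $\delta$ only strengthens the uniform-continuity conclusions in (\ref{definition.delta.1})–(\ref{definition.delta.2}), and the choice of $\delta_3$ was made for every smaller scale simultaneously. No obstacle is serious here; the only point that requires care is keeping $\alpha$ fixed (at the value from (\ref{defining.alpha})) throughout, so that Proposition \ref{free.energy.prop} applies with a single outer parameter and the iterated limit in the definition of $P_\mu(\phi)$ collapses to a single limit in $\epsilon$.
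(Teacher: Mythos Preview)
Your proof is correct and follows essentially the same approach as the paper, which simply notes that (\ref{definition.delta.1}) and (\ref{definition.delta.2}) follow from the (uniform) continuity of $\psi_i$ and $\phi$, and that (\ref{definition.delta.3}) follows from the definition of the limit in (\ref{additive.measure.theoretical.pressure.1}). Your write-up is more explicit than the paper's, but the underlying argument is identical.
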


(\ref{definition.delta.1}) and (\ref{definition.delta.2}) follows from the continuity of $\psi_i$ and $\phi$; 
(\ref{definition.delta.3}) follows from the definition of the limit (\ref{additive.measure.theoretical.pressure.1}).

\ 
\\
\\
{\bf Choosing a large time $n \geq N_0 > 0$}
\ 
\\
Now we fix a $(\delta/4,\kappa, \lambda)$-Markov covering of $\Lambda$ and choose $N_0 > 0$ in the definition of $\Lambda_0$ 
larger if necessary such that,
\begin{equation}\label{definition.n.1}
\forall \ n \geq N_0: \quad \left|\dfrac{1}{n}\log\left(\inf_E\left\{\sum_{x \in E}\exp{S_n\phi}\right\}\right) - P_{\mu}(\phi)\right| < \rho/2,
\end{equation}
and
\begin{equation}\label{definition.n.2}
\forall \ n \geq N_0: \quad \exp(n\rho) \geq \#{\mathcal R}.
\end{equation}
Moreover, we choose $N_0$ sufficiently large such that for every $n \geq N_0$ a large portion of points in $\Lambda_0$ return within a 
time $R(x) \in [n,(1+\rho)n]$ giving rise to quasi-generic branches. This is the content of the following

\begin{lemma}\label{return.lemma}
There exists a large $N_0 > 0$ satisfying (\ref{definition.quasigeneric.hyperbolic.set}), (\ref{definition.n.1}) and 
(\ref{definition.n.2}) with the following property: for every $n \geq N_0$ and for every open ball $B(p_i,\kappa) \subset \R_i$ of 
the $(\delta/4,\kappa, \lambda)$-Markov covering of $\Lambda$ there exists a subset 
$\Lambda_{0,i} \subset B(p_i,\kappa) \cap \Lambda_0$ with
$$
\mu(\Lambda_{0,i}) \geq \mu(B(p_i,\kappa) \cap \Lambda_0)/2  
$$
such that for every $x \in \Lambda_{0,i}$ returns to $B(p_i,\kappa) \cap \Lambda_0$ with a return time
\begin{equation}\label{return.time}
R(x) \in [n,(1+\rho)n].
\end{equation}
\end{lemma}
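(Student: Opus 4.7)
The strategy is to apply Birkhoff's ergodic theorem to the indicator of each set $A_i := B(p_i,\kappa) \cap \Lambda_0$ and then use Egorov's theorem to extract a large-measure subset on which the convergence of ergodic averages is uniform. Since the cover $\mathcal{R} = \{\R_i\}$ is finite, we can treat the indices $i$ one at a time and take a maximum at the end.

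First I would discard the indices $i$ for which $\mu(A_i) = 0$, setting $\Lambda_{0,i} := \emptyset$ there (the conclusion is vacuous). For the remaining indices, ergodicity of $\mu$ gives, for $\mu$-a.e.\ $x$,
$$
\frac{1}{N}\sum_{k=0}^{N-1}\chi_{A_i}(f^k(x)) \longrightarrow \mu(A_i).
$$
Restricting to $x \in A_i$ and applying Egorov's theorem, I would choose $\Lambda_{0,i} \subset A_i$ with $\mu(\Lambda_{0,i}) \geq \mu(A_i)/2$ and an integer $N_i$ such that, for every $x \in \Lambda_{0,i}$ and every $N \geq N_i$,
$$
\left|\frac{1}{N}\sum_{k=0}^{N-1}\chi_{A_i}(f^k(x)) - \mu(A_i)\right| < \varepsilon_i, \qquad \varepsilon_i := \frac{\mu(A_i)\rho}{2(2+\rho)}.
$$

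Next I would count returns in the window $[n, (1+\rho)n]$. For $n \geq N_i$ and $x \in \Lambda_{0,i}$, writing the difference of ergodic sums and applying the lower/upper bounds above:
$$
\sum_{k=n}^{\lfloor(1+\rho)n\rfloor - 1}\chi_{A_i}(f^k(x)) \;\geq\; (\mu(A_i)-\varepsilon_i)(1+\rho)n - (\mu(A_i)+\varepsilon_i)n \;=\; \frac{\mu(A_i)\rho n}{2}.
$$
Once $n \geq 2/(\mu(A_i)\rho)$ this quantity is $\geq 1$, so $x$ has at least one return to $A_i$, i.e.\ to $B(p_i,\kappa) \cap \Lambda_0$, at some time $R(x) \in [n,(1+\rho)n]$.

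Finally, I would take $N_0 := \max_i \max\{N_i, \lceil 2/(\mu(A_i)\rho)\rceil\}$, then enlarge $N_0$ if necessary so that the earlier conditions (\ref{definition.quasigeneric.hyperbolic.set}), (\ref{definition.n.1}) and (\ref{definition.n.2}) continue to hold; finiteness of the Markov cover makes this maximum harmless. There is no serious obstacle here: the content is purely a quantitative density-of-returns estimate driven by the ergodic theorem, and the only care needed is the uniformization across the finitely many rectangles via Egorov and the explicit choice of precision $\varepsilon_i$ tailored so that the visit count in the window $[n,(1+\rho)n]$ is forced to be at least one.
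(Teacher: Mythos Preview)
Your argument is correct and is, in effect, a fully worked-out version of what the paper only sketches. The paper's proof just asserts (citing \cite{katok.mendoza}) the general fact that for any Borel set $A$ with $\mu(A)>0$ the set $A_{\rho,n}=\{x\in A: f^k(x)\in A \text{ for some } k\in[n,(1+\rho)n]\}$ has $\mu(A_{\rho,n})\geq(1-\epsilon)\mu(A)$ for all large $n$, and then takes $A=B(p_i,\kappa)\cap\Lambda_0$, $\epsilon=1/2$. Your Birkhoff--Egorov computation is precisely the standard way to establish that assertion. The only cosmetic difference is that your Egorov set $\Lambda_{0,i}$ is chosen once and works for \emph{all} $n\geq N_0$, whereas the paper lets $\Lambda_{0,i}=A_{\rho,n}$ vary with $n$; your version is thus slightly stronger than what the lemma asks for. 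One tiny clean-up: the floor in $\lfloor(1+\rho)n\rfloor$ costs an additive constant in the visit-count inequality, so the threshold on $n$ should be adjusted by an $O(1)$ term, but this is absorbed harmlessly into the final choice of $N_0$.
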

\begin{remark}
 We underline that $R(x)$ is not necessarily the first return time of $x$.
\end{remark}
\begin{proof}
This follows from the ergodicity of $\mu$. Cf. \cite{katok.mendoza}. 
Let $A \subset M$ be a Borel set with $\mu(A) > 0$. Given $\rho > 0$ and $n > 0$ define
$$
A_{\rho,n} := \{x \in A: x \ \text{return to $A$ with return time}\ R(x) \in [n,(1+\rho)n]\}
$$
Then given $0 < \epsilon < 1$ there exists $N > 0$ such that
$$
\mu(A_{\rho,n}) \geq (1-\epsilon)\mu(A) \quad\text{for every} \ n \geq N.
$$
Then we apply this lemma to $A = B(p_i,\kappa) \cap \Lambda_0$ and $\epsilon = 1/2$.
\end{proof}
\ 
\\
\\
{\bf We fix once for all some $n \geq N_0$ satisfying (\ref{definition.quasigeneric.hyperbolic.set}), 
(\ref{definition.n.1}), (\ref{definition.n.2}) and the return time property in Lemma \ref{return.lemma}.}
\ 
\\
\\
{\bf Choosing $E_0$}
\ 
\\
\\
Notice that,
$$
\mu(\bigcup_{i}\Lambda_{0,i}) \geq \alpha.
$$
Therefore we can choose a maximal $(\delta,n)$ separated subset $E_0 \subset \bigcup_i\Lambda_{0,i}$ such that
\begin{equation}\label{main.estimative.1}
\left|\dfrac{1}{n}\log\left(\sum_{x \in E_0}\exp(S_n\phi(x))\right) - P_{\mu}(\phi)\right| < \rho,
\end{equation}
we can do this by (\ref{definition.n.1}).
\ 
\\
\\
{\bf The construction of $\Omega$}
\ 
\\
\\
By construction for each point $x \in E_0$ there exists a hyperbolic branch $f^{R(x)}: S_x \to U_{f^{R(x)(x)}}$ contained in some $\R_i$ and 
such that
\begin{equation}\label{rho.shadowing.2}
\diam(f^j(S_x)) < \delta/4 \quad\text{for every} \ j = 0, \cdots , R(x)-1.
\end{equation}
This and the condition of separation of points in $E_0$ implies that any two different branches subordinated to the same rectangle are 
disjoint.

Then we choose $\ell > 0$ and a subset 
$$
E_{\ell} := B(p_{\ell},\kappa) \cap E_0
$$
such that
\begin{equation}\label{main.estimative.2}
\sum_{x \in E_{\ell}}\exp{S_n\phi(x)} \geq \sum_{x \in E_{\ell'}}\exp{S_n\phi(x)} \quad\text{for every}\quad \ell' \not= \ell,
\end{equation}
and define $\Omega(\rho,s,\phi)$ as the $f$-invariant saturate of the horseshoe with finitely many branches defined by the collection of 
branches 
$\{f^R(x): S_x \to U_x: x \in E_{\ell}\}$ chosen by condition (\ref{main.estimative.2}):
\begin{equation}\label{definition.Omega}
\Omega(\rho,s,\phi) = \bigcup_{n \in \enteros}f^n\left(\bigcap_{k \in \enteros}(f^{R})^{k}\bigcup_{x \in E_{\ell}}S_x\right), 
\end{equation}
where $f^R | S_x = f^{R(x)}$.

\begin{lemma}\label{strong.approximation.lemma}
$\nu \in {\mathcal O}(\mu,\rho,s)$ for every ergodic $f$-invariant Borel probability $\nu$ supported on $\Omega$. Moreover, the rate of 
hyperbolicity of $\Omega$ is bounded from below by $\log\lambda > 0$.
\end{lemma}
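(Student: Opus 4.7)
The plan is to verify the hypotheses of Proposition \ref{strong.approximation.property} for the Alekseev set $\Omega^*$ underlying $\Omega$, and separately to extract the rate of hyperbolicity from the nonlinear expansion rate $\lambda > 1$ granted by Proposition \ref{markov.covers}. Once each hyperbolic branch $f^{R(x)}: S_x \to U_{f^{R(x)}(x)}$ defining $\Omega$ is shown to be $(\rho, s)$-quasi-generic for $\mu$, the weak-$*$ approximation statement becomes a direct consequence of Proposition \ref{strong.approximation.property}, and both conclusions of the lemma will have been reduced to propositions already established.

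To verify quasi-genericity, I would fix $x \in E_{\ell}$ and an arbitrary $y \in S_x$. Since $x \in \Lambda_0 \subset \Lambda$ and $R(x) \geq n \geq N_0$, the defining property \eqref{quasi.generic} of $\Lambda_0$ gives
$$\left|\frac{1}{R(x)}\sum_{k=0}^{R(x)-1}\psi_i(f^k(x)) - \int\psi_i\,d\mu\right| < \rho/2, \quad \forall\,i \leq s.$$
The diameter bound \eqref{rho.shadowing.2} ensures $\dist(f^k(x), f^k(y)) < \delta/4 < \delta$ for $0 \leq k \leq R(x)-1$, so by the uniform-continuity choice \eqref{definition.delta.1} one has $|\psi_i(f^k(x)) - \psi_i(f^k(y))| < \rho/2$ for every $i \leq s$ along the whole orbit segment. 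The triangle inequality then shows that $y$ is $(\rho, s, R(x))$-quasi-generic, i.e.\ the branch is $(\rho, s)$-quasi-generic in the sense of Definition \ref{quasigeneric.definition}. Proposition \ref{strong.approximation.property} then places every ergodic $f$-invariant $\nu$ supported on $\Omega$ in $\mathcal{O}(\mu, 3\rho, s)$, which yields the claim after rescaling $\rho \leftarrow \rho/3$ at the outset of the construction (the factor $3$ is irrelevant to the rest of the proof of Theorem A).

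For the rate of hyperbolicity, I would exploit that by Proposition \ref{markov.covers} every hyperbolic branch in the array expands unstable admissible curves by a factor of at least $\lambda^{R(x)}$ and contracts stable admissible curves by the inverse factor under $f^{-R(x)}$. Let $\nu$ be any ergodic measure supported on $\Omega$; passing to the induced $f^{R}$-invariant probability $\nu^*$ on $\Omega^*$, and using that the unstable bundle $E^u$ of $\nu^*$-a.e.\ point lies in the cone field $\K^u$ preserved by the branches, Birkhoff's ergodic theorem applied to the log-expansion cocycle yields, for $\nu^*$-a.e.\ $y$ and $v \in E^u(y)\setminus\{0\}$,
$$\lim_{N \to \infty}\frac{1}{N}\log\|Df^N(y)v\| \geq \log\lambda,$$
with the analogous lower bound for stable vectors under $f^{-1}$. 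Hence $\chi(\nu) \geq \log\lambda$ for every ergodic $\nu \in \mathcal{M}_f(\Omega)$, and taking the infimum gives $\chi(\Omega) \geq \log\lambda > 0$.

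The main obstacle is the bookkeeping required to translate the per-return expansion factor $\lambda^{R(x)}$ into a per-step Lyapunov exponent, since the induction is by a variable return time $R \in [n,(1+\rho)n]$ and the orbit pieces between consecutive returns are traversed by $f$ itself rather than by a hyperbolic branch. The cone-field preservation along every iterate of $f$ and the uniform hyperbolicity of the $f$-invariant saturate $\Omega$ make this routine, but it is the only genuinely technical point; the quasi-genericity argument above is then purely mechanical.
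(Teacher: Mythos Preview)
Your approach is essentially identical to the paper's: verify that each branch $f^{R(x)}:S_x\to U_{f^{R(x)}(x)}$ is $(\rho,s)$-quasi-generic via the triangle inequality (combining the $\rho/2$ from $x\in\Lambda_0$ with the $\rho/2$ from \eqref{definition.delta.1} and the diameter bound \eqref{rho.shadowing.2}), then invoke Proposition~\ref{strong.approximation.property}; for the hyperbolicity bound both you and the paper simply appeal to the nonlinear expansion estimates \eqref{nonlinear.expansion.1}--\eqref{nonlinear.expansion.2}. Your version is in fact slightly more careful---you correctly use $R(x)$ rather than $n$ in the Birkhoff average, and you flag the $3\rho$ versus $\rho$ discrepancy between Proposition~\ref{strong.approximation.property} and the lemma's statement---but there is no substantive difference in strategy.
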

\begin{proof}
This follows from Proposition \ref{strong.approximation.property} since the branches $\{f^{R(x)} : S_x \to U_x: x \in E_{\ell}\}$ are 
$(\rho,s)$-quasi generic, that is,
\begin{equation}\label{rho-s.generic}
\forall \ y \in S_x: \quad \left|\frac 1n\sum_{j=0}^{R(x)-1}\psi_{i}(f^{j}(y))-\int\psi_{i}d\mu\right|\leq \rho \quad \forall i\leq s.
\end{equation}
Indeed, for every $i$ every return from $x \in \Lambda_{0,i}$ to $f^{R(x)}(x) \in \Lambda_{0,i}$ giving rise to a 
$(\rho,s)$-quasi-generic branch (see Definition \ref{quasigeneric.definition})
$$
f^{R(x)} : S  \to U, \quad\text{with}\quad S,U \subset \R_i.
$$
This follows from (\ref{definition.delta.1}) in the definition of $\delta$, since $\diam(f^j(S)) < \delta/4$ for 
$j = 0, \cdots , R(x)-1$ by the definition of a pseudo-Markov covering and $R(x) \geq n \geq N_0$, therefore for every $y \in S$
\begin{eqnarray*}
\left|\sum_{k=0}^{n-1}\psi_i(f^k(y)) - \int\psi_i{d\mu} \right| & \leq & \left|\sum_{k=0}^{n-1}\psi_i(f^k(y)) - \sum_{k=0}^{n-1}\psi_i(f^k(x))\right| + \left|\sum_{k=0}^{n-1}\psi_i(f^k(x)) - \int\psi_i{d\mu} \right|\\
                                                                                       & \leq & \rho/2 + \rho/2\\
                                                                                       &  =   & \rho
\end{eqnarray*}
The bound on the rate of hyperbolicity follows from nonlinear expansion property (\ref{nonlinear.expansion.1}) and 
(\ref{nonlinear.expansion.2}) in proposition \ref{markov.covers}.
\end{proof}

\section{Second step: counting periodic orbits}\label{sec:proof.main.technical.lemma.2}

To estimate the topological pressure of $\Omega(\rho,s,\phi)$ we need to estimate the cardinality of periodic orbits. 
Let $Per(N)$ denote the set of periodic points with prime period $N$: $x \in Per(N)$ iff $f^N(x) = x$ and $f^k(x) \not= x$ for 
$0 < k < N$.

\begin{lemma}\label{pressure.hyperbolic.sets}
Let $\Omega$ be a basic set for a $C^r$ ($r \geq 1$) diffeomorphism and 
$\phi$ continuous. Then:
\begin{equation}\label{topological.pressure.hyperbolic}
P(f|\Omega,\phi) = \limsup\limits_{N \to +\infty}\dfrac{1}{N}\log\left(\sum\limits_{x \in Per(N)}\exp{S_N\phi}\right).
\end{equation}
\end{lemma}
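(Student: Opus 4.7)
This is the classical Bowen pressure formula on a hyperbolic basic set, identifying $P(f|\Omega,\phi)$ with the weighted exponential growth rate of periodic orbits inside $\Omega$. I would prove the two inequalities separately, exploiting the two hallmarks of a basic set: \emph{expansivity} (for the upper bound) and the \emph{specification property} (for the lower bound).

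\textbf{Upper bound.} Let $\delta_{0}>0$ be an expansivity constant for $f|\Omega$ and fix $0<\epsilon<\delta_{0}$. Distinct points of $Per(N)$ are $(\epsilon,N)$-separated in $\Omega$: if $d(f^{k}x,f^{k}y)\le\epsilon$ for $0\le k<N$, then $N$-periodicity of $x$ and $y$ extends the inequality to every $k\in\enteros$, contradicting expansivity. Hence
\[
\sum_{x\in Per(N)}e^{S_{N}\phi(x)} \,\le\, \sup\Bigl\{\sum_{x\in E}e^{S_{N}\phi(x)}\;:\;E\subset\Omega\text{ is }(\epsilon,N)\text{-separated}\Bigr\},
\]
and taking $\frac{1}{N}\log$, $\limsup_{N}$, and $\epsilon\downarrow 0$ yields the inequality $\le P(f|\Omega,\phi)$ via the separated-set definition of topological pressure.

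\textbf{Lower bound.} For the reverse inequality I invoke Bowen's specification property on the basic set $\Omega$: for every $\delta>0$ there is $M=M(\delta)$ such that every orbit segment $(x,\dots,f^{n-1}x)$ with $x\in\Omega$ is $\delta$-shadowed by a periodic point of period exactly $N=n+M$. Fix $\epsilon>0$ small, let $\eta(\delta)$ be a uniform modulus of continuity of $\phi$, and take $\delta<\epsilon/3$. Let $E_{n}\subset\Omega$ be a maximal $(\epsilon,n)$-separated set whose weighted sum realises $P(f|\Omega,\phi)$ at scale $\epsilon$ in the limit. For each $x\in E_{n}$ choose a shadowing periodic $p_{x}$; the injectivity of $x\mapsto p_{x}$ follows from $\delta<\epsilon/3$, and the comparison $|S_{N}\phi(p_{x})-S_{n}\phi(x)|\le n\eta(\delta)+M\|\phi\|_{\infty}$ yields
\[
\sum_{x\in Fix(f^{N})}e^{S_{N}\phi(x)} \,\ge\, e^{-n\eta(\delta)-M\|\phi\|_{\infty}}\sum_{x\in E_{n}}e^{S_{n}\phi(x)}.
\]
Dividing by $N$, taking $\limsup_{n}$, and letting $\epsilon,\delta\downarrow 0$ delivers the bound $\limsup_{N}\frac{1}{N}\log\sum_{Fix(f^{N})}e^{S_{N}\phi}\ge P(f|\Omega,\phi)$.

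\textbf{From $Fix(f^{N})$ to $Per(N)$ --- the main obstacle.} The step I expect to require the most care is upgrading this last lower bound from the full $Fix(f^{N})$ to the prime-period subset $Per(N)$. Writing $Fix(f^{N})=\bigsqcup_{d\mid N}Per(d)$, I would control the proper-divisor contribution ($d\le N/2$) via the identity $S_{N}\phi(x)=(N/d)S_{d}\phi(x)$ on $Per(d)$ together with the power-sum inequality $\sum_{x}a_{x}^{N/d}\le(\sum_{x}a_{x})^{N/d}$ (with $a_{x}=e^{S_{d}\phi(x)}$), reducing it to the already-proved upper bound applied at scale $d$, times the subexponential divisor count $\tau(N)$. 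Alternatively, and cleaner for this paper's setting, one may choose $E_{n}$ to consist of points that are quasi-generic for an equilibrium state $\mu_{\phi}$ of $\phi$ on $\Omega$ (which exists by expansivity); ergodicity and non-atomicity of $\mu_{\phi}$ then force the shadowing periodic orbits $p_{x}$ to have prime period exactly $N$ for all but a subexponentially small subset of $E_{n}$. Either route gives $\sum_{Per(N)}e^{S_{N}\phi}$ and $\sum_{Fix(f^{N})}e^{S_{N}\phi}$ the same exponential rate, completing the proof.
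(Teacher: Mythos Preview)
The paper does not prove this lemma at all: immediately after the statement it simply records that the formula ``was proved in \cite[Section 7.19 (7.11)]{ruelle.2004} for Smale spaces \ldots\ which includes basic sets of $C^r$ diffeomorphisms''. So your proposal is not a different route to the same proof --- it is a proof where the paper offers only a citation.

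Your argument is the classical one and is sound. Two remarks are worth making. First, Bowen's specification property in the form you use (shadowing by a periodic point of period \emph{exactly} $n+M$) is available for \emph{mixing} basic sets; a general basic set is only transitive, so one should invoke the spectral decomposition into cyclically permuted mixing pieces and run the argument for a suitable power of $f$ on each piece. This is routine but should be said. Second, your attention to the $Fix(f^N)$ versus $Per(N)$ distinction is well placed: Ruelle's formula in \cite{ruelle.2004} is stated for $Fix(f^N)$, whereas the paper's lemma is phrased with the prime-period set $Per(N)$. So the passage you flag as ``the main obstacle'' is in fact the one step \emph{not} covered by the citation the paper gives. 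Either of the two mechanisms you sketch (Möbius-type control of proper-divisor contributions, or choosing $E_n$ quasi-generic for an equilibrium state so that the shadowing periodic orbits are forced to have full prime period) does the job for an infinite basic set; note that the statement as written with $Per(N)$ fails for the degenerate case of a single periodic orbit, so some positive-entropy hypothesis is implicitly needed and is present in the paper's applications.
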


This was proved in \cite[Section 7.19 (7.11)]{ruelle.2004} for Smale spaces \cite[Section 7.1]{ruelle.2004} which includes basic sets of $C^r$ 
($r \geq 1$) diffeomorphisms and bilateral full shifts of finitely many symbols.

To estimate the limsup in (\ref{topological.pressure.hyperbolic}) one has to keep track of the combinatorics of periodic orbits. 
This is done as follows.

Let $E^p = E \times \cdots \times E$ the cartesian product of $p$-copies of $E$, where we denote $E = E_{\ell}$ to simplify notation.

\begin{lemma}\label{symbolic.dynamics}
$f^{R} | \Omega^*$ is topologically conjugated to the full-shift on $\#E$ symbols.
\end{lemma}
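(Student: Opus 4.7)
The plan is to construct the conjugacy $h : \Omega^* \to E^{\mathbb{Z}}$ via the itinerary map and verify it is a homeomorphism intertwining $f^R$ with the shift $\sigma$.

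First I would define, for each $z \in \Omega^*$, the itinerary $h(z) = (a_i(z))_{i \in \mathbb{Z}}$ by letting $a_i(z)$ be the unique element of $E$ with $(f^R)^i(z) \in S_{a_i(z)}$. This is well defined because all the stable cylinders $\{S_x\}_{x \in E}$ sit inside the single rectangle $\R_\ell$ and are pairwise disjoint: the underlying points of $E$ are $(\delta,n)$-separated and the returns $f^{R(x)}(x)$ together with the diameter control $\diam(f^j(S_x)) < \delta/4$ force the hyperbolic branches subordinated to $\R_\ell$ to have disjoint domains. That $h$ conjugates $f^R$ with the shift is immediate: $(f^R)^{i+1}(z) = (f^R)^i(f^R(z)) \in S_{a_{i+1}(z)}$ gives $a_i(f^R(z)) = a_{i+1}(z)$.

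Next I would verify that $h$ is continuous. Continuity in the product topology amounts to showing that for each $i$ the map $z \mapsto a_i(z)$ is locally constant on $\Omega^*$; this follows because the cylinders $S_x$ are compact and mutually disjoint, so each $S_x \cap \Omega^*$ is relatively open in $\Omega^*$, and $(f^R)^i$ is continuous on $\Omega^*$. Injectivity follows from the hyperbolic product structure (Lemma \ref{alekseev.set}): two points with the same symbolic sequence share the same forward $f^R$-orbit up to arbitrarily long time, hence lie on a common local stable leaf $\gamma^s \in \mathcal F^S$, and likewise on a common local unstable leaf $\gamma^u \in \mathcal F^U$; since $\gamma^s \cap \gamma^u$ is a single point, the two points coincide.

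The core step is surjectivity, and this is exactly what the Alekseev crossing hypothesis is designed for. Given any sequence $(a_i) \in E^{\mathbb{Z}}$, I would build $z \in \Omega^*$ with $h(z) = (a_i)$ as follows. Because every stable cylinder $S_{a_{i+1}}$ crosses every unstable cylinder $U_{a_i}$ transversally inside $\R_\ell$, the set $S_{a_{i+1}} \cap (f^R)(S_{a_i})$ is a nonempty admissible substrip of $S_{a_{i+1}}$. Iterating this forwards and backwards yields a nested decreasing sequence of nonempty compact subsets
\[
\bigcap_{|i| \le N} (f^R)^{-i}(S_{a_i}),
\]
whose diameters shrink to zero by the exponential contraction on $\mathcal F^S$ and $\mathcal F^U$ granted by Lemma \ref{alekseev.set}. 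Compactness of $\R_\ell$ gives a unique point $z$ in the total intersection, which lies in $\Omega^*$ and has $h(z) = (a_i)$ by construction.

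Finally, $h$ is a continuous bijection between compact Hausdorff spaces, hence a homeomorphism, and the shift relation $h \circ f^R = \sigma \circ h$ already noted completes the conjugacy. The only subtle point in the argument is the surjectivity step, where one must use the full Alekseev crossing property to know every combinatorial prescription can be realized; the other steps are soft consequences of disjointness of the $S_x$'s and of the hyperbolic product structure already established in Lemma \ref{alekseev.set}.
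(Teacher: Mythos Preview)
Your argument is correct and follows the same route as the paper: code points of $\Omega^*$ by their itineraries through the disjoint cylinders $\{S_x\}_{x\in E}$. The paper phrases the coding via $\delta/4$-shadowing (for each $z\in\Omega^*$ there is a unique $x\in E$ with $\dist(f^j(x),f^j(z))<\delta/4$ for $0\le j<R(x)$, uniqueness coming from the $(\delta,n)$-separation of $E$), which is equivalent to your membership criterion $(f^R)^i(z)\in S_{a_i(z)}$ since $\diam f^j(S_x)<\delta/4$. Your write-up is in fact more complete: the paper only exhibits the well-defined itinerary map and then asserts the conjugacy, whereas you explicitly check continuity, injectivity via the hyperbolic product structure, and surjectivity via the Alekseev crossing property and nested compacta.
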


\begin{proof}
We first observe that, as the stable cylinders $S_x$, $x \in E$ are disjoint and
$$
\Omega^* = \bigcap\limits_{n \in \enteros}(f^{R})^{(n)}\left(\bigcup\limits_{x \in E}S_x\right),
$$
where $x_n = (f^{R})^{(n)}(x)$ is defined inductively as
$$
\begin{cases}
x_{n+1} = f^{R(x_n)}(x_n) & \text{for} \ n \geq 0\\
x_{n-1} = f^{-R(x_n)}(x_n) & \text{for} \ n \leq 0
\end{cases}
$$
setting $x = x_0$. That is,
$$
\begin{cases}
(f^{R})^{(n)}(x) = f^{\sum_{0 \leq i < n}R(x_i)}(x) & \text{for} \ n \geq 0\\
(f^{R})^{(n)}(x) = f^{-\sum_{n < i \leq 0}R(x_i)}(x) & \text{for} \ n \leq 0
\end{cases}
$$
In particular, for every $z \in \Omega^*$ there exists a unique $x \in E$ such that
$$
\dist(f^j(x),f^j(z)) < \delta/4 \quad\text{for every}\quad j = 0, \cdots , R(x)-1.
$$
Unicity of $x \in E$ follows since $E$ is part of a $\delta/2$-separated set. Then we associate to every $x \in \Omega^*$ a unique bi-infinite sequence 
$\{x_n\} \in E^{\enteros}$ defined by shadowing the orbit of $z$. This shows that $f^{R} | \Omega^*$ is topologically 
conjugated to the full-shift on $\#E$ symbols.
\end{proof}

\begin{corollary}\label{coding.periodic.orbits}
If $z \in \Omega$ is $f$-periodic, and assuming without loss of generality that $z \in \Omega^*$, then its successive returns to $\Omega^*$ 
define an $f^{R}$-periodic orbit given by a uniquely defined sequence $x_k \in E$, $0 \leq k < p$, $p > 1$, such that
$$
x_{k+1} = f^{R}(x_k), \ \text{for} \ 0 \leq k < p \ \text{and} \ x_{0} = f^{R}(x_{p-1}).
$$
Moreover,
\begin{equation}\label{delta.shadowing.condition}
\dist(f^{j+\sum_{i < k}R(x_i)}(z), f^j(x_k)) < \delta/4 \quad\text{for}\quad j = 0, \cdots , R(x_k)-1, \ k = 0, \cdots p-1.
\end{equation} 
\end{corollary}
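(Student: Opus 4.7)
The plan is to derive this corollary as a direct consequence of Lemma \ref{symbolic.dynamics} together with the diameter control (\ref{rho.shadowing.2}). First, I would argue that the hypothesis $z \in \Omega^*$ together with $f$-periodicity of $z$ automatically yields an $f^R$-periodic orbit: let $N$ be the prime $f$-period of $z$. Since $\Omega^*$ is $f^R$-invariant, every iterate $(f^R)^k(z)$ lies in $\Omega^*$, and because $f^N(z) = z$ forces the running sum $\sum_{i<k} R(x_i)$ eventually to equal $N$ (each return time being uniformly bounded below by $n \geq N_0$), there exists a minimal $p \geq 1$ with $(f^R)^p(z) = z$. The assumption that the $f$-prime period of $z$ exceeds any single return time gives $p > 1$.

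Next, I would invoke the topological conjugacy of Lemma \ref{symbolic.dynamics} between $f^R|_{\Omega^*}$ and the full shift on $\#E$ symbols. Under this coding, $z$ corresponds to a unique bi-infinite sequence in $E^{\enteros}$ that, by the periodicity just established, has period exactly $p$ as a shift sequence. This produces the finite sequence $x_0,\ldots,x_{p-1} \in E$ with $x_{k+1} = f^R(x_k)$ in the symbolic sense (namely $(f^R)^{k+1}(z) \in S_{x_{k+1}}$) and $x_0 = f^R(x_{p-1})$. Uniqueness of each symbol $x_k$ comes from the fact that the stable cylinders $\{S_x : x \in E\}$ are pairwise disjoint, itself a consequence of $E$ being a $(\delta,n)$-separated set together with (\ref{rho.shadowing.2}).

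The shadowing estimate (\ref{delta.shadowing.condition}) is then a bookkeeping step. By construction $(f^R)^k(z) = f^{\sum_{i<k} R(x_i)}(z) \in S_{x_k}$, and of course $x_k \in S_{x_k}$ as well. Therefore, for each $0 \leq j \leq R(x_k)-1$, both points
\[
f^{j + \sum_{i<k} R(x_i)}(z) = f^j\bigl((f^R)^k(z)\bigr) \quad\text{and}\quad f^j(x_k)
\]
lie in the piece $f^j(S_{x_k})$, which has diameter smaller than $\delta/4$ by (\ref{rho.shadowing.2}). The bound (\ref{delta.shadowing.condition}) follows at once.

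The one delicate point I anticipate is the bookkeeping that relates $f$-periodicity to $f^R$-periodicity: one must ensure that the returns indexed by $f^R$ partition the $f$-period $N$ exactly, without missing or double-counting returns. This is where the $f^R$-invariance of $\Omega^*$, combined with the fact that returns in the construction are not required to be first returns, does all the work. Once this combinatorial consistency is checked, the rest is purely formal shadowing inside the uniformly small cylinders guaranteed by the pseudo-Markov property (Proposition \ref{markov.covers}).
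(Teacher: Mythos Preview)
Your approach is the natural one and matches what the paper intends: the corollary is stated without proof in the paper precisely because it is meant to follow directly from the conjugacy in Lemma~\ref{symbolic.dynamics}, and your derivation of the shadowing estimate (\ref{delta.shadowing.condition}) from the diameter bound (\ref{rho.shadowing.2}) and the containment $(f^R)^k(z),\, x_k \in S_{x_k}$ is exactly right.

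There is one imprecise step. You assert that ``$f^N(z)=z$ forces the running sum $\sum_{i<k}R(x_i)$ eventually to equal $N$'', but this is neither justified nor obviously true: the running sums increase in jumps of size at least $n$, so they may skip over $N$ and only land on a proper multiple of $N$. The clean argument for $f^R$-periodicity is different. The iterates $(f^R)^k(z)=f^{m_k}(z)$ all lie in the finite set $\{z,f(z),\ldots,f^{N-1}(z)\}\cap\Omega^*$, so by pigeonhole two of them coincide; since $f^R$ is a bijection of $\Omega^*$ (being conjugate to the full shift), this forces $(f^R)^p(z)=z$ for some minimal $p\geq 1$. Note that the corollary as stated does not require $m_p=N$; that identity is only invoked later, in equation (\ref{admissible.periods.equation.0}) of Lemma~\ref{admissible.periods}. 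With this replacement your argument is complete.
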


\begin{remark}
Let $z \in Per(f|\Omega)$ a periodic point. Then according to our previous discussion its prime period $N = N(z)$ is a linear combination of the 
basic periods $n_1, \cdots , n_{\#E}$ of the branches generating $\Omega^*$, namely, there exists integers $p_i \in \enteros^+$, $i = 1, \cdots \#E$ 
such that
\begin{equation}\label{admissible.periods.equation.1}
N = p_1n_1 + \cdots p_{\#E}n_{\#E}.
\end{equation}
\end{remark}

\begin{definition}
We say that $N \in \enteros^+$ is an admissible period if it satisfies (\ref{admissible.periods.equation.1}) for some sequence of non-negative 
integers $p_i \in \enteros^+$, $i = 1, \cdots \#E$.
\end{definition}

\begin{lemma}\label{admissible.periods}
Let $N > 0$ be an admissible period, $z \in Per(N)$ and $[x_0, \cdots , x_{p-1}] \in E^p$ the unique sequence of points in $E$ which 
sucessively $\delta/4$-shadows ${\mathcal O}(z)$ when the orbit cycles around $\Omega$ provided by Corollary \ref{coding.periodic.orbits}. 
Then
\begin{equation}\label{counting.cycles}
\dfrac{N}{n(1+\rho)} \leq p \leq \dfrac{N}{n},
\end{equation}
where $n >0$ were fixed after lemma \ref{return.lemma}.
\end{lemma}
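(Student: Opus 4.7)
The plan is to combine the return-time bound $R(x)\in[n,(1+\rho)n]$ guaranteed by Lemma \ref{return.lemma} (see equation (\ref{return.time})) with the identification of $N$ as the total $f$-time elapsed during one full cycle of the symbolic orbit $[x_0,\ldots,x_{p-1}]$.

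First I would verify the identity
\begin{equation*}
N \;=\; \sum_{k=0}^{p-1} R(x_k).
\end{equation*}
By Corollary \ref{coding.periodic.orbits}, the sequence $[x_0,\ldots,x_{p-1}]$ describes an $f^{R}$-periodic orbit of period $p$ attached to $z$, and one full $f^{R}$-cycle of $p$ steps corresponds to iterating $f$ exactly $T:=\sum_{k=0}^{p-1}R(x_k)$ times. Hence $f^{T}(z)=z$, so the prime $f$-period $N$ divides $T$. Conversely, since $z\in\Omega^*$ and the visits of the $f$-orbit of $z$ to $\Omega^*$ occur precisely at the partial sums $T_j=\sum_{i<j}R(x_i)$ for $j\geq 0$, the condition $f^N(z)=z\in\Omega^*$ forces $N$ to coincide with some visit time $T_{j_0}$ with $1\leq j_0\leq p$. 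But then $(f^{R})^{j_0}(z)=z$, so $p\mid j_0$ by minimality of $p$, forcing $j_0=p$ and $N=T_p=T$.

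With this identity in hand the inequalities are immediate. Applying Lemma \ref{return.lemma} to each $x_k\in E_{\ell}\subset\Lambda_0$ yields $n\leq R(x_k)\leq(1+\rho)n$; summing over $k=0,\ldots,p-1$ gives
\begin{equation*}
pn \;\leq\; \sum_{k=0}^{p-1} R(x_k) \;=\; N \;\leq\; p(1+\rho)n,
\end{equation*}
which is equivalent to $\dfrac{N}{n(1+\rho)}\leq p\leq\dfrac{N}{n}$.

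The only delicate point is the identification $N=\sum R(x_k)$, i.e.\ that the $f$-prime-period of $z$ coincides with the total $f$-time of one full $f^{R}$-cycle; this rests on the fact that $z\in\Omega^*$ is visited by its $f$-orbit only at the partial-sum instants $T_j$, together with the minimality of both $N$ and $p$. Once this is secured, the remainder of the proof reduces to the uniform two-sided bound on the return times recorded in Lemma \ref{return.lemma}.
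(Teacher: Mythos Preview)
Your proof is correct and follows the same approach as the paper. The paper simply asserts the identity $N=\sum_{k=0}^{p-1}R(x_k)$ and then, exactly as you do, combines it with the two-sided bound $n\le R(x_k)\le (1+\rho)n$ from (\ref{return.time}) to obtain $pn\le N\le (1+\rho)np$; your additional justification of the identity via the visit-time and minimality argument is sound but more than the paper supplies.
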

\begin{proof}
Let $N = N(z)$ the prime period of $z$ then 
\begin{equation}\label{admissible.periods.equation.0}
N = \sum\limits_{i=0}^{p-1}R(x_i). 
\end{equation}
Using (\ref{admissible.periods.equation.0}) and that $n \leq R(x_k) \leq (1+\rho)n$ for every $k = 0, \cdots , p-1$, one conclude that 
$pn \leq N \leq (1+\rho)np$ and we get (\ref{counting.cycles}). 
\end{proof}

\begin{lemma}\label{shadowing.argument.2}
Let $N$ be an admissible period, $z \in Per(N)$ a periodic point for $f | \Omega$ of prime period $N$ and 
$$
[x_0, \cdots , x_{p-1}] \in E^p
$$
the encoding sequence defined in Corollary \ref{coding.periodic.orbits}. Then,
\begin{equation}\label{balance.equation.1}
\exp\left(S_N(\phi+\rho)(z)\right) \geq \prod\limits_{k=0}^{p-1}\exp\left(S_{R(x_k)}\phi(x_k)\right)
\end{equation}
and
\begin{equation}\label{balance.equation.2}
\exp\left(S_N(\phi-\rho)(z)\right) \leq \prod\limits_{i=0}^{p-1}\exp\left(S_{R(x_k)}\phi(x_k)\right).
\end{equation}
\end{lemma}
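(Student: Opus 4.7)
The plan is to obtain both inequalities at once from a single estimate that compares $S_N\phi(z)$ with the telescoped sum $\sum_{k=0}^{p-1} S_{R(x_k)}\phi(x_k)$ using the shadowing condition from Corollary \ref{coding.periodic.orbits} together with the uniform continuity bound (\ref{definition.delta.2}) on $\phi$.

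First I would decompose the Birkhoff sum along the orbit of $z$ according to the successive returns to $\Omega^{*}$. Since $N$ is the prime period and $z$ cycles through the blocks indexed by $x_0,\dots,x_{p-1}$, we have $N = \sum_{k=0}^{p-1} R(x_k)$, so
\begin{equation*}
S_N\phi(z) \;=\; \sum_{k=0}^{p-1}\, \sum_{j=0}^{R(x_k)-1} \phi\bigl(f^{\,j+\sum_{i<k}R(x_i)}(z)\bigr).
\end{equation*}

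Next, for each fixed $k$ and each $j = 0,\dots,R(x_k)-1$, the shadowing bound (\ref{delta.shadowing.condition}) gives
\begin{equation*}
\dist\bigl(f^{\,j+\sum_{i<k}R(x_i)}(z),\, f^j(x_k)\bigr) < \delta/4 < \delta,
\end{equation*}
so by the choice of $\delta$ in (\ref{definition.delta.2}) we obtain $\bigl|\phi(f^{\,j+\sum_{i<k}R(x_i)}(z)) - \phi(f^j(x_k))\bigr| < \rho$. Summing these inequalities first over $j$ yields $\bigl| S_{R(x_k)}\phi(f^{\sum_{i<k}R(x_i)}(z)) - S_{R(x_k)}\phi(x_k)\bigr| < \rho\, R(x_k)$, and then summing over $k$ and using $\sum_k R(x_k) = N$ yields the key estimate
\begin{equation*}
\Bigl| S_N\phi(z) - \sum_{k=0}^{p-1} S_{R(x_k)}\phi(x_k) \Bigr| \;<\; \rho\, N.
\end{equation*}

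Finally, rearranging the two sides of this inequality gives $S_N\phi(z) + \rho N \geq \sum_k S_{R(x_k)}\phi(x_k)$ and $S_N\phi(z) - \rho N \leq \sum_k S_{R(x_k)}\phi(x_k)$. Exponentiating and recognizing $S_N(\phi\pm\rho)(z) = S_N\phi(z) \pm \rho N$ turns the products into (\ref{balance.equation.1}) and (\ref{balance.equation.2}).

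There is no real obstacle here: the lemma is essentially a bookkeeping statement. The only point deserving attention is making sure the Birkhoff sum $S_N\phi(z)$ is correctly broken into $p$ blocks of length $R(x_k)$ that align with the shadowing, so that each pointwise $\rho$-estimate assembles exactly into an $\rho N$-estimate, using the identity $N = \sum_k R(x_k)$ already established in the proof of Lemma \ref{admissible.periods}.
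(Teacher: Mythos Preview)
Your proof is correct and follows essentially the same approach as the paper: both decompose $S_N\phi(z)$ into $p$ blocks aligned with the returns, apply the $\delta$-shadowing condition (\ref{delta.shadowing.condition}) together with the modulus-of-continuity bound (\ref{definition.delta.2}) to get a pointwise $\rho$-error, sum to obtain the $N\rho$ estimate, and exponentiate. Your write-up is in fact slightly more explicit about the block decomposition than the paper's.
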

\begin{proof}
Recall that the branches originating the Alekseev set $\Omega^*$ satisfy
$$
\left|\sum_{j=0}^{R(x)-1}\phi(f^j(y)) - \sum_{j=0}^{R(x)-1}\phi(f^j(z))\right| < R(x)\rho \quad\text{for every}\quad y,z \in S_x.
$$
This is by our choice of $\delta > 0$ since $\diam(f^j(S_x)) < \delta/4$ for $j = 0, \cdots , R(x)-1$.
Then by (\ref{admissible.periods.equation.0}) and (\ref{delta.shadowing.condition})
$$
\left|\sum_{j=0}^{N-1}\phi(f^j(z)) - \sum\limits_{k=0}^{p-1}\sum\limits_{j=0}^{R(x_k)-1}\phi(f^j(x_k))\right| < N\rho.
$$
Therefore
$$
S_N(\phi+\rho)(z) \geq \sum\limits_{k=0}^{p-1}\sum\limits_{j=0}^{R(x_k)-1}\phi(f^j(x_k)) \quad\text{and}\quad S_N(\phi-\rho)(z) \leq \sum\limits_{k=0}^{p-1}\sum\limits_{j=0}^{R(x_k)-1}\phi(f^j(x_k)).
$$
Taking the exponential at both sides we get (\ref{balance.equation.1}) and (\ref{balance.equation.2}).
\end{proof}

\section{Third step: pressure estimates and conclusion}\label{sec:proof.main.technical.lemma.3}

To finish the proof we need to estimate the pressure of the $f$-invariant saturate of the Alekseev set chosen in previous 
step.

\begin{definition}
Let $ p > 0$ be any positive integer. We denote $\Delta(p)$ the set of admissible periods of periodic orbits in $\Omega(\rho,s,\phi)$, 
encoded into $E^p$:
\begin{equation}\label{admissible.periods&coding}
\Delta(p) := \{N \,:\, \exists \ [x_0, \cdots , x_{p-1}] \in E^p \quad\text{such that}\quad N = \sum\limits_{k=0}^{p-1}R(x_k)\,\}.
\end{equation}
\end{definition}

\begin{lemma}\label{main.bounds}
For every positive integer $p > 0$ it holds: 
\begin{eqnarray}
\label{1}
\sum_{N \in \Delta(p)}\sum\limits_{z \in Per(N)}\exp(S_N(\phi+\rho)(z)) \geq \left[\sum_{x \in E}\exp(S_{R(x)}\phi(x))\right]^{p} \\
\label{2}
\sum_{N \in \Delta(p)}\sum\limits_{z \in Per(N)}\exp(S_N(\phi-\rho)(z)) \leq \left[\sum_{x \in E}\exp(S_{R(x)}\phi(x))\right]^{p} 
\end{eqnarray}
\end{lemma}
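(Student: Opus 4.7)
The natural approach is to parametrize the periodic structure via the symbolic coding from Lemma~\ref{symbolic.dynamics}. I would first set up the correspondence $E^{p} \ni \sigma = [x_{0},\ldots,x_{p-1}] \leftrightarrow z_{\sigma} \in \Omega^{*}$, where $z_{\sigma}$ is the unique periodic point of $(f^{R})^{p}$ coded by the periodic extension of $\sigma$; this point satisfies $f^{N_{\sigma}}(z_{\sigma}) = z_{\sigma}$ with $N_{\sigma} := \sum_{k=0}^{p-1} R(x_{k}) \in \Delta(p)$. Thus every sequence $\sigma \in E^{p}$ produces an admissible pair $(z_{\sigma}, N_{\sigma})$ that is a candidate contribution to the left-hand sides of (\ref{1}) and (\ref{2}).

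Next, for each $\sigma \in E^{p}$ I would invoke Lemma~\ref{shadowing.argument.2} to obtain the pointwise bounds
\[
\exp\bigl(S_{N_{\sigma}}(\phi-\rho)(z_{\sigma})\bigr) \;\le\; \prod_{k=0}^{p-1}\exp\bigl(S_{R(x_{k})}\phi(x_{k})\bigr) \;\le\; \exp\bigl(S_{N_{\sigma}}(\phi+\rho)(z_{\sigma})\bigr).
\]
Summing over all $\sigma \in E^{p}$ and recognizing the product identity
\[
\sum_{\sigma \in E^{p}}\prod_{k=0}^{p-1}\exp\bigl(S_{R(x_{k})}\phi(x_{k})\bigr) \;=\; \left[\sum_{x \in E}\exp\bigl(S_{R(x)}\phi(x)\bigr)\right]^{p},
\]
both inequalities of Lemma~\ref{main.bounds} reduce to comparing the sequence-indexed sum $\sum_{\sigma \in E^{p}}\exp\bigl(S_{N_{\sigma}}(\phi \pm \rho)(z_{\sigma})\bigr)$ with the periodic-point sum $\sum_{N \in \Delta(p)}\sum_{z \in Per(N)}$ that appears in the statement.

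The delicate step, and where I expect the main obstacle, is precisely this translation. Aperiodic sequences $\sigma$ (those not of the form $\tau^{m}$ with $m > 1$) correspond bijectively to periodic points $z_{\sigma}$ of prime $f$-period exactly $N_{\sigma} \in \Delta(p)$, and contribute identically to both sides. Sequences that are proper repetitions produce the same $z_{\sigma}$ as their prime block but with $N_{\sigma}$ equal to a multiple of the prime period, so they must be tracked to avoid simultaneously double-counting on one side and missing contributions on the other. To close the argument cleanly, I would use Corollary~\ref{coding.periodic.orbits} to verify that the map $\sigma \mapsto (z_{\sigma}, N_{\sigma})$ is a bijection between $E^{p}$ and the pairs $\{(z, N) : N \in \Delta(p),\ f^{N}(z) = z,\ z \text{ coded by some } E^{p}\text{-sequence}\}$; inequality (\ref{1}) then follows because every such pair sits inside the LHS of (\ref{1}) with at least the weight $\exp(S_{N}(\phi+\rho)(z))$ guaranteed by shadowing, and (\ref{2}) follows by the reverse direction of the same bookkeeping.
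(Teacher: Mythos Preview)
Your approach is essentially the same as the paper's: both invoke the shadowing estimates of Lemma~\ref{shadowing.argument.2} termwise over the coding set $E^{p}$ and then collapse the resulting sum via the multinomial identity $(a_{1}+\cdots+a_{n})^{m}=\sum_{[i_{1},\ldots,i_{m}]}a_{i_{1}}\cdots a_{i_{m}}$. The paper's proof is in fact terser than yours---it simply asserts the inequality between the periodic-point sum and the $E^{p}$-sum without discussing the $\sigma\mapsto(z_{\sigma},N_{\sigma})$ correspondence---so your explicit identification of the bookkeeping around proper repetitions $\sigma=\tau^{m}$ is additional care that the paper does not spell out.
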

\begin{proof}
Using inequality (\ref{balance.equation.1}) in Lemma \ref{shadowing.argument.2}, (\ref{counting.cycles}) and the identity
$$
(a_1 + \cdots + a_n)^m = \sum\limits_{[i_1, \cdots , i_m]}\,a_{i_1} \cdots a_{i_m} \quad\text{where}\quad [i_1, \cdots , i_m] \in \{1, \cdots ,n\}^m
$$
we get
\begin{eqnarray*}
\sum_{N \in \Delta(p)}\sum\limits_{z \in Per(N)}\exp(S_N(\phi+\rho)(z)) & \geq & \sum\limits_{[x_1, \cdots , x_p] \in E^p} \ \prod\limits_{k=0}^{p-1}\exp\left(S_{R(x_k)}\phi(x_k)\right) \\
                                                  &  =   & \left[\sum_{x \in E}\exp(S_{R(x)}\phi(x))\right]^{p},
\end{eqnarray*}
thus proving (\ref{1}). Inequality (\ref{2}) follows similarly using inequality (\ref{balance.equation.2}) in Lemma \ref{shadowing.argument.2}.
\end{proof}

\medskip
\ 
\\
{\bf Proof of Lemma \ref{main.technical.lemma}}
\ 
\\
\\
We proved already that $\nu \in \mathcal{O}(s,3\rho)$ for every ergodic $f$-invariant Borel probability $\nu$ supported on $\Omega$ in Lemma 
\ref{strong.approximation.lemma}. Now we want to prove the estimates (\ref{single.potential.estimative}) in Lemma \ref{main.technical.lemma}. For this 
we use previous section results to estimate $P(f | \Omega, \phi)$.
  
We first notice that, as $R(x) \in [n,(1+\rho)n]$ for every $x \in E$ we have
\begin{equation}\label{3}
\sum_{x \in E}\exp(S_{R(x)}\phi(x)) \geq \sum_{x \in E}\exp(S_{n}\phi(x)) \times \exp(n\rho\inf\phi)
\end{equation}
and
\begin{equation}\label{4}
\sum_{x \in E}\exp(S_{R(x)}\phi(x)) \leq \sum_{x \in E}\exp(S_{n}\phi(x)) \times \exp(n\rho\sup\phi).
\end{equation}
By (\ref{main.estimative.1}) and the choice of $E = E_{\ell}$ as the set which maximizes the sums 
$$
\sum_{x \in E_{\ell}}\exp(S_{n}\phi(x))
$$
in (\ref{main.estimative.2}), we have
$$
\#{\mathcal R}\sum_{x \in E}\exp(S_{n}\phi(x)) \geq \sum_{x \in E_0}\exp(S_{n}\phi(x)) >  \exp(n[P_{\mu}(\phi)-\rho])
$$
thus giving
\begin{equation}\label{5.1}
\sum_{x \in E}\exp(S_{n}\phi(x)) \geq \exp(n[P_{\mu}(\phi)-2\rho]),
\end{equation}
since $\exp(n\rho) > \#{\mathcal R}$, by (\ref{definition.n.2}). On the other hand, as $E \subset E_0$
\begin{equation}\label{5.2}
\sum_{x \in E}\exp(S_{n}\phi(x)) \leq \sum_{x \in E_0}\exp(S_{n}\phi(x)) < \exp(n[P_{\mu}(\phi)+\rho]).
\end{equation}

Therefore, substituting (\ref{5.1}) into (\ref{3}) and recalling (\ref{1}) we get the lower bound
\begin{equation}\label{6}
\sum_{N \in \Delta(p)}\sum\limits_{z \in Per(N)}\exp(S_N(\phi+\rho)(z)) \geq \left[\exp(n[P_{\mu}(\phi)-2\rho])\times \exp(n\rho\inf\phi)\right]^{p}
\end{equation}

On the other hand, as $R(x) \in [n,(1+\rho)n]$ we have
\begin{equation}\label{bounds.admissible.periods}
np \leq N \leq n(1+\rho)p \quad\text{for every admissible period}\quad N \in \Delta(p). 
\end{equation}
Therefore, 
\begin{equation}\label{cardinal.admissible.periods}
\#\Delta(p) \leq n(1+\rho)p - np = np\rho 
\end{equation}

Hence, maximizing the sums $\sum_{z \in Per(N)}\exp(S_N(\phi+\rho)(z))$ over the set of admissible periods $N \in \Delta(p)$ we get
$$
\#\Delta(p)\sum\limits_{z \in Per(N_p)}\exp(S_{N_p}(\phi+\rho)(z)) \geq \left[\exp(n[P_{\mu}(\phi)-2\rho])\times \exp(n\rho\inf\phi)\right]^{\frac{N_p}{(1+\rho)n}},
$$
for a suitable admissible period $N_p \in \Delta(p)$, where we have used (\ref{counting.cycles}) to bound from below $p > 0$ in terms of $N_p$.

Therefore by (\ref{cardinal.admissible.periods}) we get
\begin{equation}\label{8}
np\rho\times\sum\limits_{z \in Per(N_p)}\exp(S_{N_p}(\phi+\rho)(z)) \geq \left[\exp(n[P_{\mu}(\phi)-2\rho])\times \exp(n\rho\inf\phi)\right]^{\frac{N_p}{(1+\rho)n}}.
\end{equation}

Then, taking logarithms and dividing by $N_p$ in (\ref{8}) and letting $p \to +\infty$, we get a sequence $N_p \to +\infty$ of admissible 
periods, such that, 
\begin{eqnarray*}
P(f|\Omega,\phi+\rho) & = & \limsup\limits_{N \to +\infty}\dfrac{1}{N}\log\sum\limits_{z \in Per(N)}\exp(S_{N}(\phi+\rho)(z))\\
                      & \geq & \lim\limits_{p \to +\infty}\dfrac{\log(np\rho)}{N_p} + \lim\limits_{p \to +\infty}\dfrac{1}{N_p}\log\sum\limits_{z \in Per(N_p)}\exp(S_{N_p}(\phi+\rho)(z))\\
                      & \geq & \dfrac{P_{\mu}(\phi)-2\rho}{1+\rho} + \dfrac{\rho\inf\phi}{1+\rho}.
\end{eqnarray*}

On the other hand, introducing (\ref{5.2}) into (\ref{4}) and recalling (\ref{2}) we get 
\begin{equation}\label{7}
\sum_{N \in \Delta(p)}\sum\limits_{z \in Per(N)}\exp(S_N(\phi-\rho)(z)) \leq \left[\exp(n[P_{\mu}(\phi)+\rho])\times \exp(n\rho\sup\phi)\right]^{p}
\end{equation}
Therefore, for every admissible period $N \in \Delta(p)$ and for every $p \geq 0$,
\begin{equation}\label{7.1}
\sum\limits_{z \in Per(N)}\exp(S_N(\phi-\rho)(z)) \leq \left[\exp(n[P_{\mu}(\phi)+\rho])\times \exp(n\rho\sup\phi)\right]^{p}
\end{equation}
and then, taking logarithms and dividing by $N$, for every admissible period $N \in \Delta(p)$, $p \geq 0$,
\begin{eqnarray*}
\dfrac{1}{N}\log\left(\sum\limits_{z \in Per(N)}\exp(S_N(\phi-\rho)(z))\right) & \leq & \dfrac{np}{N}(P_{\mu}(\phi)+\rho + \rho\sup\phi)\\
                                                                               & \leq & P_{\mu}(\phi)+\rho + \rho\sup\phi,
\end{eqnarray*}
using (\ref{bounds.admissible.periods}). Hence,
\begin{eqnarray*}
P(f|\Omega,\phi-\rho) & = &  \limsup\limits_{N \to +\infty}\dfrac{1}{N}\log\left(\sum\limits_{z \in Per(N)}\exp(S_N(\phi-\rho)(z))\right)\\
                      & \leq & P_{\mu}(\phi)+\rho + \rho\sup\phi.
\end{eqnarray*}
This proves estimative (\ref{single.potential.estimative}) at Lemma \ref{main.technical.lemma}, after a straightforward calculation, using $P(\phi + c) = P(\phi)$ ([Theorem 2.1, (vii)]\cite{walters}). \ {\bf QED}

\end{document}